\documentclass[12pt, reqno]{amsart}
\usepackage{amsmath, amsthm, amscd, amsfonts, amssymb, graphicx, color}
\usepackage{mathrsfs}
\usepackage{graphicx}
\usepackage{caption}
\usepackage{subcaption}
\usepackage[bookmarksnumbered, colorlinks, plainpages]{hyperref}
\hypersetup{colorlinks=true,linkcolor=red, anchorcolor=green, citecolor=cyan, urlcolor=red, filecolor=magenta, pdftoolbar=true}

\newtheorem{theorem}{Theorem}[section]
\newtheorem{lemma}[theorem]{lemma}

\theoremstyle{remark}

\numberwithin{equation}{section}
\allowdisplaybreaks
\begin{document}

\title[]{Logarithmic Coefficients Problems of  Geometric Subclass of Closed-to-convex Functions}

\author[C. Dhara, and  N. Ghosh]{Chayani Dhara, and Nirupam Ghosh}
	
\address[Dhara]{Department of Mathematics, Indian Institute of Engineering Science and Technology, Shibpur, Howrah 711103, West Bengal, India}
\email{chayanidhara1999@gmail.com}

\address[Ghosh] {Department of Mathematics, Indian Institute of Engineering Science and Technology, Shibpur, Howrah 711103, West Bengal, India}
\email{nirupamghoshmath@gmail.com}

\subjclass[2020]{30C45, 30C50, 30C80}

\keywords{Univalent, Close--to-convex, Differential subordination, Coefficient estimates, Logarithmic coefficients, Hankel determinant}

\begin{abstract} 
	 For $\alpha\ge 0$, let $\mathcal{W}(\alpha)$ be the class of all analytic functions in the unit disk $\mathbb{D}$ with normalization  $f(0) = 0 $ and $ f'(0) = 1 $ that satisfy the relation $Re\,\{f'(z) + \alpha z f''(z)\} > 0$. This article aims to establish sharp bounds for logarithmic coefficients $\gamma_1$, $\gamma_2$ and $\gamma_3$ and  logarithmic inverse coefficients $\Gamma_1$, $\Gamma_2$ and $\Gamma_3$  of functions in $\mathcal{W}(\alpha)$. The sharp upper and lower bounds for $\bigl|\,\gamma_2 \,\bigr|-\bigl|\,\gamma_1\,\bigr|$ and $\bigl|\,\Gamma_2 \,\bigr|-\bigl|\,\Gamma_1\,\bigr|$  have been obtained for the class  $\mathcal{W}{(\alpha)}$. In addition, we establish sharp inequality for the second Hankel determinant of the logarithmic and  inverse logarithmic coefficients for the class $\mathcal{W}{(1)}$.
\end{abstract}

\maketitle	

\section{Introduction}
Let  $\mathcal{H}$ be the class of holomorphic functions defined on the unit disk $\mathbb{D}:=\{z\in\mathbb{C}:|z|<1\}$. Let $\mathcal{A}$ be the class of functions $f\in\mathcal{H}$ such that $f(0) = 0 $ and $ f'(0) = 1 $. Any functions $f \in \mathcal{A}$ have the following power series expansion
\begin{equation}\label{P1-eq-01}
f(z)=z+\sum_{n=2}^{\infty} a_n z^n~~~~~~~~~~~~~~~~ z \in \mathbb{D}.
\end{equation}
 The most familiar subclass of $\mathcal{A}$ consists of univalent (i.e, one-to-one) functions $f$ in $\mathbb{D}$, and it is denoted by $\mathcal{S}$. 
Let $\mathcal{S}^*$ and $\mathcal{K}$ be the classes of starlike and close-to-convex functions in $\mathcal{S}$, respectively. A function $f\in\mathcal{A}$ is said to be a starlike function in $\mathbb{D}$ if $f(\mathbb{D})$ is a starlike domain with respect to the origin. A function $f\in \mathcal{S}^{*}$  if and only if
$$Re\,\left(z\frac{f'(z)}{f(z)}\right)>0~~ \text{for}~ z\in \mathbb{D}.$$
A function $f\in\mathcal{A}$ is close-to-convex if $\mathbb{C} \setminus f(\mathbb{D})$ can be written as the union of non-intersecting half-lines. It is known (see \cite[Theorem 2.16]{duren1983univalent}) that a function $f \in\mathcal{K}$  if and only if  there exist $g\in\mathcal{\mathcal{S}^*}$ and $\beta\in (-\pi/2, \pi/2)$ such that
$$Re\,\left(e^{i\beta}\frac{z f'(z)}{g(z)}\right) >0 ~~ \text{for}~ z\in \mathbb{D}.$$

In 1977, Chichra \cite{chichra1977newclass} introduced the class $\mathcal{W}{(\alpha)}$, consisting of functions $f\in\mathcal{A}$ such that $Re\,\{f'(z) + \alpha z f''(z)\} > 0,~~~ \mbox{for}~~~z \in \mathbb{D}~~~ \mbox{and}~~~\alpha \ge 0.$
Chichra \cite{chichra1977newclass} shown that 
 $\mathcal{W}{(\alpha)} \subset \mathcal{K}$, and consequently 
$\mathcal{W}{(\alpha)}$ forms a subclass of $\mathcal{S}$. In 1982, Singh and Singh \cite{Singh1982} proved that the class 
$\mathcal{W}(1)$ is contained in the class of starlike functions 
$\mathcal{S}^*$. Subsequently, Ponnusamy and Vasudevarao \cite{Ponnusamy2010} investigated the variability regions associated with functions belonging to 
$\mathcal{W}{(\alpha)}$. 

 For each $f \in \mathcal{S}$ there exists a unique function $F_{f}\in\mathcal{A} $ such that $ F_{f}(0)=0$ and $$f(z)=z\exp(F_{f}(z)), ~~~z \in \mathbb{D}.$$ 
 Moreover, for some neighborhood of the origin, $F_{f}$ can be expressed as 
 \begin{equation}\label{P1-eq-01A}
   F_{f}(z)=\log \frac{f(z)}{z} = 2\sum_{n=1}^{\infty} \gamma_n z^n,   
 \end{equation}
where $\gamma_n,~n \in \mathbb{N}$ are known as logarithmic coefficients of $f \in \mathcal{S}$.
 Logarithmic coefficients are known to play a fundamental role in the Milin conjecture(\cite{Milin1977} for $f \in \mathcal{S}$.  Louis de Branges \cite{Branges1985APO} solved the long-standing Bieberbach conjecture \cite{bieberbach1916uber} by showing the Milin conjecture\cite{Milin1977}  for $f \in \mathcal{S}$. Milin conjected that for $f \in \mathcal{S}$ and $n\geq 2$
 $$
 \sum_{m=1}^{n} \sum_{k=1}^{m} \left( k |\gamma_k|^2 - \frac{1}{k} \right) \le 0,
 $$
 where equality holds if and only if $f$ is the Koebe function $K(z) = z / (1 - z)^2$ or its rotation $e^{-i \theta }K( e^{i \theta }z) = z / (1 - e^{i \theta }z)$ for some $\theta \in \mathbb{R}$. For the Koebe function, the logarithmic coefficients are $\gamma_n = {1}/{n}$.
Since the Koebe function frequently arises as the extremal function for a wide range of extremal problems for the class $\mathcal{S}$, it is natural to expect that $|\gamma_n| \le {1}/{n}$ holds for functions in $\mathcal{S}$. However, this bound does not hold in general, even at the order-of-magnitude level. Indeed, there exists a bounded function $f \in \mathcal{S}$ with logarithmic coefficients $\gamma_n = O(n^{-0.83})$ (see \cite[Theorem 8.4]{duren1983univalent}).

Note that differentiating (\ref{P1-eq-01A}) and using (\ref{P1-eq-01}), we get
\begin{align}\label{p1-eq-002}
& \gamma_1 = \frac{1}{2} a_2, \\ \nonumber
& \gamma_2 = \frac{1}{2}\left( a_3 - \frac{1}{2} a_2^2 \right),\\\nonumber
&\gamma_3 = \frac{1}{2}\left( a_4 - a_2 a_3 + \frac{1}{3} a_2^3 \right).
\end{align}
As $|a_2| \leq 2$, for $f \in \mathcal{S}$, $|\gamma_1| \leq 1$. Using the Fekete-Szeg\"o inequality \cite [Theorem 3.8]{duren1983univalent} for $ f \in \mathcal{S}$, we can get the sharp estimate 
$$
|\gamma_{2}| \leq \frac{1}{2} (1 + 2 e^{-1})| = 0.635\ldots.
$$
For $n \geq 3$, no significant upper bound for $|\gamma_n|$ where $f \in \mathcal{S}$ is yet to be known. Logarithmic coefficients for the class $\mathcal{S}$ and its subclasses are one of the recent topics of interest for various authors. Significant results in this topic can be found in  \cite{Firoz2018, Duren1979, Elhosh1996, Girela2000, LeckoSim2025} and the references there in.

Let $F$ be the inverse function of $f \in \mathcal{S}$ defined in a neighborhood of the origin with the Taylor
series expansion  
\begin{equation}\label{eq:inverse-expansion}
    F(w) := f^{-1}(w) = w + \sum_{n=2}^{\infty} A_n w^n, ~~~~|w| < 1/4.
\end{equation}
Using a variational method, Löwner \cite{Loewner1923} obtained the sharp estimate $|A_n| \le K_n$ for each $n \in \mathbb{N}$, where
$K_n = {(2n)!}/({n!(n+1)!})$ and $K(w) = w + K_2 w^2 + K_3 w^3 + \cdots$ is the inverse of the
Köebe function. There has been a good deal of interest in determining the behavior of the inverse coefficients
of $f$ given by (\ref{P1-eq-01}) where the corresponding function $f$ restricted to some proper geometric subclasses of $\mathcal{S}$. Since $f(f^{-1}(w)) = w$, using \eqref{eq:inverse-expansion}, we get
\begin{align}\label{eq:inverse-coeff-relations}
   A_2 = -a_2,~~~~~A_3 = -a_3 + 2a_2^2 ~~~~~\mbox{and}~~~~~A_4 &= -a_4 + 5a_2 a_3 - 5 a_2^3.
\end{align}

For $f\in \mathcal{S}$, the logarithmic inverse coefficients are defined by the equation
\begin{equation}\label{eq:log-inverse-def}
    \log \frac{f^{-1}(w)}{w}
    = 2\sum_{n=1}^{\infty} \Gamma_n\, w^n,
    \qquad |w|<\frac{1}{4}.
\end{equation}
Differentiating \eqref{eq:log-inverse-def} together with \eqref{eq:inverse-expansion} and \eqref{eq:inverse-coeff-relations} we get the relation between the logarithmic inverse coefficients and Taylor's coefficients of $f \in \mathcal{S}$ given by (\ref{P1-eq-01}) as 
\begin{equation}\label{eq:Gamma-coefficients}
    \Gamma_1 = -\frac{1}{2}a_2,~~
    \Gamma_2 = -\frac{1}{2}\left(a_3 - \frac{3}{2}a_2^2\right) ~~\mbox{and}~~
    \Gamma_3 = -\frac{1}{2}\left(a_4 - 4a_2 a_3 + \frac{10}{3}a_2^3\right).
\end{equation}
In \cite{Ponnusamy2018} Ponnusamy et al. obtained a sharp bound for the logarithmic inverse coefficients for the class $\mathcal{S}$. In fact, Ponnusamy et al. \cite{Ponnusamy2018} established for $f \in \mathcal{S}$ that
$$|\Gamma_n| \le \frac{1}{2n}
    \binom{2n}{n},$$
and showed that the equality holds only for the Köebe function or its rotations.  Moreover Ponnusamy et al. \cite{Ponnusamy2018} obtained sharp bound for logarithmic inverse coefficients for some of the important geometric subclasses of $\mathcal{S}$.



For $q, n \in \mathbb{N}$, the Hankel determinant $H_{q,n}(f)$ of the Taylor's coefficients of a
function $f \in \mathcal{A}$ of the form \eqref{P1-eq-01} is defined by
$$
H_{q,n}(f) =
\begin{vmatrix}
a_n & a_{n+1} & \cdots & a_{n+q-1} \\
a_{n+1} & a_{n+2} & \cdots & a_{n+q} \\
\vdots & \vdots & \ddots & \vdots \\
a_{n+q-1} & a_{n+q} & \cdots & a_{n+2(q-1)}
\end{vmatrix}.
$$
Hankel determinants of various orders have recently been studied with considerable attention and have been investigated by several authors (see \cite{ref12, Krishna2015, ref5, ref18,  Sim2022}). It is well known that the Fekete-Szegö functional corresponds to the second Hankel determinant $H_{2,1}(f)$. Fekete-Szegö then further generalized  by establishing bounds for  $\lvert a_3 - \mu a_2^2 \rvert$
with $\mu \in \mathbb{R}$  for $f \in \mathcal{S}$ (see \cite[Theorem 3.8]{duren1983univalent}). Recently increasing attention has been devoted to the investigation of Hankel determinants involving logarithmic coefficients (denoted by $H_{q,n}(F_f/2) $)  and logarithmic inverse coefficients (denoted by $H_{q,n}(F_{f^{-1}}/2)$) for various geometric subclasses of $\mathcal{S}$ (see \cite{ref3, ref6, ref13,ref14}).


In 2023,   Lecko and Partyka \cite{LeckoPartyka2023} investigated sharp upper and lower bounds for $|\gamma_2| - |\gamma_1|$ for functions in the class $\mathcal{S}$. Also Kumar and Cho~\cite{KumarCho2023} established sharp bounds for $\bigl|\,\gamma_2 \,\bigr|-\bigl|\,\gamma_1\,\bigr|$ within several subclasses of $\mathcal{S}$. More recently, Allu and Shaji \cite{Allu2025} estimated the sharp upper and lower bounds for moduli difference $\bigl|\,\Gamma_2 \,\bigr|-\bigl|\,\Gamma_1\,\bigr|$ for functions in class $\mathcal{S}$ and for functions in some important subclasses of univalent functions.

 This paper is organized as follows. In section \ref{section-2}, we have discussed some basic results to prove our main results. In section \ref{section-3}, we have proved the logarithmic coefficients and logarithmic inverse coefficients for functions in $\mathcal{W}(\alpha)$. In section \ref{section-4}, we have discussed the moduli difference of successive logarithmic and logarithmic inverse coefficients for functions in $\mathcal{W}(\alpha)$. In section \ref{section-5}, we have investigated the Hankel determinant whose elements are logarithmic and logarithmic inverse coefficients of $f \in \mathcal{W}(1)$.

\section{Preliminary Results}\label{section-2}

Let $\mathcal{P}$ be the class of all analytic functions $p$ in the unit disk 
$\mathbb{D} = \{ z \in \mathbb{C} : |z| < 1 \}$ satisfying $p(0) = 1$ and $Re\,( p(z)) > 0$ for $z \in \mathbb{D}$.
Therefore, every $p \in \mathcal{P}$ can be represented as 
\begin{equation}\label{P Class-1}
 p(z) = 1 +\sum_{n=1}^{\infty} p_n z^n ~~~~~~~~~~~~z \in \mathbb{D}.   
\end{equation}
Elements of the class $\mathcal{P}$ are called \textit{Carath\'eodory functions}. 
It is well known that $|p_n| \leq 2$, $n \geq 1$, for a function $p \in \mathcal{P}$ (see \cite{duren1983univalent}).
The Carathéodory class $\mathcal{P}$ appears naturally in the study of univalent functions. 

To prove our results, we need the following lemmas for  the Carathéodory class $\mathcal{P}$.
\begin{lemma}\cite{MaMinda1992} \label{p1-lemma-001}
Let $p \in \mathcal{P}$ be of the form (\ref{P Class-1}). Then for a real number $v$,
$$
\left| p_2 - v p_1^2 \right|
\le
\begin{cases}
-4v+2, & \text{if } v \le 0, \\[6pt]
2, & \text{if } 0 \le v \le 1, \\[6pt]
4v-2, & \text{if } v \ge 1 .
\end{cases}$$
\end{lemma}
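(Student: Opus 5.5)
The natural route is the classical Ma--Minda argument, which rests on a sharp representation of the second and first coefficients of a Carath\'eodory function. For $p\in\mathcal{P}$ of the form (\ref{P Class-1}) I would use the well-known formulas (Libera--Z\l otkiewicz)
\[
2p_2 = p_1^2 + (4-p_1^2)\zeta ,\qquad |\zeta|\le 1 .
\]
The rotation $p(e^{i\theta}z)\in\mathcal{P}$ multiplies $p_1$ by $e^{i\theta}$ and $p_2$ by $e^{2i\theta}$. Hence $|p_2-vp_1^2|$ is rotation invariant, and we may assume $p_1=c\in[0,2]$.

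Substituting gives
\[
p_2 - v p_1^2 = \tfrac12 c^2 + \tfrac12(4-c^2)\zeta - v c^2 = \Bigl(\tfrac12 - v\Bigr)c^2 + \tfrac12(4-c^2)\zeta .
\]
By the triangle inequality and $|\zeta|\le1$,
\[
|p_2 - v p_1^2| \le \Bigl|\tfrac12 - v\Bigr|c^2 + 2 - \tfrac12 c^2 =: \phi(c).
\]
The function $\phi$ is linear in $t=c^2\in[0,4]$, so its maximum is attained at an endpoint:
\[
\max\phi = \max\Bigl\{2,\ 4\bigl|\tfrac12-v\bigr|\Bigr\} = \max\{2,\ |4v-2|\}.
\]
Now split into the three cases of the statement. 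If $v\le0$, then $|4v-2|=2-4v\ge2$, giving the bound $2-4v$. If $0\le v\le1$, then $|4v-2|\le2$, giving the bound $2$. If $v\ge1$, then $4v-2\ge2$, giving the bound $4v-2$. This is exactly the claimed piecewise bound.

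The only nontrivial input is the representation of $p_2$, which I would quote as standard; it follows from Carath\'eodory--Toeplitz positivity or from Schur's algorithm applied to $\omega=(p-1)/(p+1)$. If one prefers to avoid it, an alternative is the Schwarz function route. Writing $p=(1+\omega)/(1-\omega)$ with $\omega(z)=c_1z+c_2z^2+\cdots$ gives $p_1=2c_1$ and $p_2=2(c_2+c_1^2)$. Then
\[
p_2-vp_1^2 = 2\bigl(c_2+(1-2v)c_1^2\bigr),
\]
and the known inequality $|c_2+\mu c_1^2|\le\max\{1,|\mu|\}$ yields the same bound $\max\{2,|4v-2|\}$. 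Sharpness, though not required, follows from $p(z)=(1+z)/(1-z)$ for $v\notin(0,1)$ and $p(z)=(1+z^2)/(1-z^2)$ for $0\le v\le1$.
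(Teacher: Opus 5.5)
Your proof is correct and complete for the inequality as stated. After rotating so that $p_1=c\in[0,2]$, the representation $2p_2=c^2+(4-c^2)\zeta$ gives $|p_2-vp_1^2|\le \bigl|\tfrac12-v\bigr|c^2+2-\tfrac12c^2$. This is affine in $c^2\in[0,4]$, so its maximum is $\max\{2,|4v-2|\}$, which is exactly the three-case bound.

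The paper gives no proof of this lemma; it imports it from Ma and Minda, so there is no argument in the paper to compare against. What your proof buys is self-containment using tools the paper already has. Your second route is just \eqref{P1-eq-mr-34} ($p_1=2c_1$, $p_2=2(c_2+c_1^2)$) together with $|c_2|\le 1-|c_1|^2$ from Lemma~\ref{p1-lemma-010}, which gives in one line $|p_2-vp_1^2|=2|c_2+(1-2v)c_1^2|\le 2\bigl(1+(|1-2v|-1)|c_1|^2\bigr)\le\max\{2,|4v-2|\}$.

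Your intermediate bound is also stronger than the lemma itself. For $0<v\le\tfrac12$ it reads $|p_2-vp_1^2|+v|p_1|^2\le 2$. For $\tfrac12\le v<1$ it reads $|p_2-vp_1^2|+(1-v)|p_1|^2\le 2$. These are the refinements Ma and Minda record alongside the lemma.

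What the citation buys is the full equality characterization (i)--(iv) listed after the lemma. Your examples $(1+z)/(1-z)$ and $(1+z^2)/(1-z^2)$ show the bound is attained, which is all the paper needs for its extremal functions. You do not prove the ``only if'' parts, for instance the one-parameter family of extremals at $v=0$ and $v=1$.
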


Moreover, equality holds under the following conditions:

\begin{itemize}
\item[(i)] If $v<0$ or $v>1$, equality holds if and only if
$$
p(z)=\frac{1+z}{1-z}
\quad \text{or one of its rotations.}
$$

\item[(ii)] If $0<v<1$, equality holds if and only if
$$p(z)=\frac{1+z^2}{1-z^2}\quad \text{or one of its rotations.}$$

\item[(iii)] If $v=0$, equality holds if and only if
$$
p(z)=\left(\frac12+\frac{\lambda}{2}\right)\frac{1+z}{1-z}
+\left(\frac12-\frac{\lambda}{2}\right)\frac{1-z}{1+z},
\qquad 0\le \lambda \le 1,$$or one of its rotations.

\item[(iv)] If $v=1$, equality holds only when $p$ is the reciprocal of one of the functions that guarantees equality in the case $v=0$, and only in that case.
\end{itemize}

\begin{lemma}\cite{SimThomas2020}\label{p1-lemma-005}
Let $p \in \mathcal{P}$ be of the form (\ref{P Class-1}) and  $B_1, B_2,$ and $B_3$ be numbers such that $B_1 \ge 0$,  $B_2 \in \mathbb{C}$, and $B_3 \in \mathbb{R}$. 
Define $\Psi_{+}(p_1,p_2)$ and $\Psi_{-}(p_1,p_2)$ by
$$
\Psi_{+}(p_1,p_2) = \left| B_2 p_1^2 + B_3 p_2 \right| - \left| B_1 p_1 \right|,
$$
and
$$
\Psi_{-}(p_1,p_2) = -\Psi_{+}(p_1,p_2).
$$
Then
$$
\Psi_{+}(p_1,p_2) \le
\begin{cases}
|4B_2 + 2B_3| - 2B_1, & \text{when } |2B_2 + B_3| \ge |B_3| + B_1, \\[6pt]
2|B_3|, & \text{otherwise},
\end{cases}
$$
and
$$
\Psi_{-}(p_1,p_2) \le
\begin{cases}
2B_1 - B_4, & \text{when } B_1 \ge B_4 + 2|B_3|, \\[8pt]
\frac{2B_1 \sqrt{2|B_3|}}{\sqrt{B_4 + 2|B_3|}}, 
& \text{when } B_1^2 \le 2|B_3|(B_4 + 2|B_3|), \\[12pt]
2|B_3| + \frac{B_1^2}{B_4 + 2|B_3|}, 
& \text{otherwise},
\end{cases}
$$
where
$$
B_4 = |4B_2 + 2B_3|.
$$
All inequalities are sharp.
\end{lemma}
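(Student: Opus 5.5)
The plan is to reduce both inequalities to a one-variable extremal problem, using the classical parametrization of the second coefficient of a Carath\'eodory function. For $p\in\mathcal{P}$ of the form (\ref{P Class-1}) I would use the Libera--Z{\l}otkiewicz formula
\[
2p_2 = p_1^2 + (4-p_1^2)\zeta, \qquad |\zeta|\le 1 .
\]
Every pair $(c,\zeta)$ with $c\in[0,2]$ and $|\zeta|\le 1$ is realized by some $p\in\mathcal{P}$. Both $\Psi_{+}$ and $\Psi_{-}$ are invariant under the rotation $p(z)\mapsto p(e^{i\theta}z)$, because $B_2p_1^2+B_3p_2$ is multiplied by $e^{2i\theta}$. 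Hence I may assume $p_1=c\in[0,2]$. Writing $A=B_2+B_3/2$, so that $|A|=B_4/4$, I get
\[
B_2p_1^2+B_3p_2 = Ac^2 + \tfrac12 B_3(4-c^2)\zeta .
\]

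\emph{Upper bound for $\Psi_{+}$.} By the triangle inequality,
\[
\Psi_{+}\le \varphi(c):=\tfrac12\bigl(|2B_2+B_3|-|B_3|\bigr)c^2 + 2|B_3| - B_1c,
\]
with equality for a suitable unimodular $\zeta$. If the coefficient of $c^2$ is nonpositive, then $\varphi$ is decreasing on $[0,2]$, so its maximum is $\varphi(0)=2|B_3|$. Otherwise $\varphi$ is convex, so its maximum on $[0,2]$ is attained at an endpoint: either $\varphi(0)=2|B_3|$ or $\varphi(2)=|4B_2+2B_3|-2B_1$. Comparing the two values shows that $\varphi(2)\ge\varphi(0)$ exactly when $|2B_2+B_3|\ge |B_3|+B_1$. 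This condition also forces the $c^2$-coefficient to be nonnegative, which gives the stated dichotomy. For sharpness, $c=2$ is attained by $p(z)=(1+z)/(1-z)$. The case $c=0$ with a suitable $|\zeta|=1$ is attained by a rotation of $(1+z^2)/(1-z^2)$.

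\emph{Upper bound for $\Psi_{-}$.} Here
\[
\Psi_{-}=B_1c-\bigl|Ac^2+\tfrac12B_3(4-c^2)\zeta\bigr|,
\]
and I minimize the modulus over $|\zeta|\le1$. The minimum equals $\max\bigl(0,\tfrac14 B_4c^2-\tfrac12|B_3|(4-c^2)\bigr)$. Thus $\Psi_{-}\le h(c)$, where
\[
h(c)=B_1c-\max\Bigl(0,\ \tfrac14(B_4+2|B_3|)c^2-2|B_3|\Bigr).
\]
The inner expression vanishes at $c_0^2=8|B_3|/(B_4+2|B_3|)$.
\begin{itemize}
\item On $[0,c_0]$, $h(c)=B_1c$, which is increasing with value $h(c_0)=2B_1\sqrt{2|B_3|}/\sqrt{B_4+2|B_3|}$.
\item On $[c_0,2]$, $h$ is a concave quadratic with vertex at $c^*=2B_1/(B_4+2|B_3|)$. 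Its vertex value is $2|B_3|+B_1^2/(B_4+2|B_3|)$.
\end{itemize}
The three cases of the lemma then correspond to the position of $c^*$:
\begin{itemize}
\item If $c^*\le c_0$, which is equivalent to $B_1^2\le 2|B_3|(B_4+2|B_3|)$, the maximum is at $c_0$.
\item If $c^*\ge 2$, which is equivalent to $B_1\ge B_4+2|B_3|$, the maximum is at $c=2$, with value $2B_1-B_4$.
\item Otherwise the maximum is at the vertex $c^*$.
\end{itemize}
For sharpness, the maximizing $c$ together with the minimizing $\zeta$ defines an admissible pair. I would take the corresponding extremal Carath\'eodory function from the parametrization, for instance a suitable convex combination of the form appearing in Lemma \ref{p1-lemma-001}(iii) with $p_1=c$.

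The main obstacle I expect is bookkeeping rather than ideas. One issue is checking that the case conditions match exactly, in particular that the first $\Psi_{-}$ condition $B_1\ge B_4+2|B_3|$ and the second condition cannot both hold except in degenerate situations. The other issue is handling the degenerate case $B_4+2|B_3|=0$ separately: there $\Psi_{-}=B_1c$, whose maximum on $[0,2]$ is $2B_1$, consistent with the first formula.
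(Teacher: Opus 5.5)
Your proof is correct, and it is the standard argument for this lemma. The paper gives no proof of its own; it quotes the result from Sim and Thomas. Your route is the usual one for such functionals: Libera--Z{\l}otkiewicz parametrization, rotation to $p_1=c\in[0,2]$, the triangle inequality (for $\Psi_{+}$) or exact minimization over $\zeta$ (for $\Psi_{-}$), and then a one-variable maximization. Your formulas for $\varphi$, $h$, $c_0$, $c^*$ and the vertex value are right. So is the translation of $c^*\le c_0$ and $c^*\ge 2$ into the stated conditions, and your handling of the overlapping and degenerate cases. Your case split also uses implicitly that $c_0\le 2$; this always holds, since $c_0^2=8|B_3|/(B_4+2|B_3|)\le 4$.

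The one thing to correct is your suggested extremal function for $\Psi_{-}$. The family in Lemma \ref{p1-lemma-001}(iii) equals $(1+cz+z^2)/(1-z^2)$ with $c=2\lambda$, so it always has $p_2=2$, i.e. $\zeta=1$. The maximizing $c$ always satisfies $c\ge c_0$. At such $c$, for $B_3\ne0\ne B_4$, the minimizing parameter is
\[
\zeta^*=-\mathrm{sgn}(B_3)\,\frac{4B_2+2B_3}{B_4},
\]
and this equals $1$ only when $4B_2+2B_3$ is real with sign opposite to that of $B_3$. In the paper's own application to $|\gamma_2|-|\gamma_1|$, both $B_3$ and $4B_2+2B_3$ are positive, so $\zeta^*=-1$. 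Already at $\alpha=0$, your choice gives $B_2p_1^2+B_3p_2=2/15\ne0$ at $c=c_0$, so it misses the bound $1/\sqrt5$.

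A realizer valid for every $c\in[0,2]$ and $|\zeta|\le 1$ is
\[
p=\frac{1+\omega}{1-\omega},\qquad \omega(z)=z\,\frac{c/2+\zeta z}{1+(c/2)\zeta z},
\]
which has $p_1=c$ and $p_2=\frac12\bigl(c^2+(4-c^2)\zeta\bigr)$. This justifies your realizability claim and supplies all the extremal functions. For $\zeta=-1$ it is $(1-z^2)/(1-cz+z^2)$, and for $\zeta=1$ it is the Lemma \ref{p1-lemma-001}(iii) family. Since you had already asserted that every pair $(c,\zeta)$ is attained, this is a slip in the illustration rather than a gap in the proof.
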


Let $\mathcal{B}$ denote the class of analytic functions $\varphi : \mathbb{D} \to \mathbb{D}$ such that $\varphi(0)=0$. Functions in $\mathcal{B}$ are known as the Schwarz functions.  
A function $\varphi \in \mathcal{B}$ can be written as a power series
\begin{equation}\label{Schwarz function-1}
    \varphi(z)=\sum_{n=1}^{\infty} c_{n} z^{n}.
\end{equation}

For $f, g \in \mathcal{H}$, we say that $f$ is \emph{subordinate} to $g$, denoted by
$f \prec g$, if there exists a  Schwarz function $\varphi$ such that $$f(z)=g(\varphi(z)) ~~~~~ z \in \mathbb{D}.$$ 
In particular, if $g$ is
univalent, then $f \prec g$ if, and only if, $f(0)=g(0)$ and $f(\mathbb{D}) \subset g(\mathbb{D})$.

Next we recall the following well-known result for Schwarz functions.

\begin{lemma}\cite{Carlson1940}\label{p1-lemma-010}
Let $\varphi(z)=c_{1}z + c_{2}z^{2} + c_{3}z^{3} + \cdots$ be a Schwarz function.  
Then
\begin{equation*}
|c_{1}| \le 1,~~~ |c_{2}| \le 1 - |c_{1}|^{2},~~~|c_3|\le 1 - |c_{1}|^{2}-\frac{|c_{2}|^{2}}{ 1 + |c_{1}|}.    
\end{equation*}
\end{lemma}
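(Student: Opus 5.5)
The plan is to run the first two steps of the Schur algorithm on $\varphi$ and read off the three inequalities from the Schwarz lemma applied at each step.

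\emph{Step 1.} The bound $|c_1|\le 1$ is the Schwarz lemma itself.

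\emph{Step 2.} Set $\psi(z)=\varphi(z)/z=c_1+c_2z+c_3z^2+\cdots$. By the Schwarz lemma $|\psi|\le 1$ on $\mathbb{D}$.
\begin{itemize}
\item If $|c_1|=1$, the maximum principle forces $\psi\equiv c_1$. Then $c_2=c_3=0$ and all three inequalities hold trivially (reading $|c_2|^2/(1+|c_1|)=0$).
\item If $|c_1|<1$, compose with a disk automorphism and put
$$\omega(z)=\frac{\psi(z)-c_1}{1-\overline{c_1}\,\psi(z)}.$$
This is a Schwarz function.
\end{itemize}

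\emph{Step 3: expand $\omega$.} Write $\psi(z)-c_1=c_2z+c_3z^2+\cdots$ and $1-\overline{c_1}\psi(z)=(1-|c_1|^2)-\overline{c_1}c_2z-\cdots$. Expanding the quotient to second order gives
$$\omega(z)=\frac{c_2}{1-|c_1|^2}\,z+\left(\frac{c_3}{1-|c_1|^2}+\frac{\overline{c_1}\,c_2^2}{(1-|c_1|^2)^2}\right)z^2+\cdots .$$

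\emph{Step 4: bound on $c_2$.} Applying Step 1 to $\omega$ gives $|c_2|\le 1-|c_1|^2$.

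\emph{Step 5: bound on $c_3$.} Apply the result of Step 4 to $\omega$, that is, the inequality $|d_2|\le 1-|d_1|^2$ for the Schwarz function $\omega=d_1z+d_2z^2+\cdots$. Multiplying through by $(1-|c_1|^2)^2$, this reads
$$\bigl|c_3(1-|c_1|^2)+\overline{c_1}c_2^2\bigr|\le (1-|c_1|^2)^2-|c_2|^2 .$$
The triangle inequality then gives
$$|c_3|\le 1-|c_1|^2-\frac{|c_2|^2}{1-|c_1|^2}+\frac{|c_1||c_2|^2}{1-|c_1|^2}.$$
The last two terms combine to
$$-\frac{|c_2|^2(1-|c_1|)}{1-|c_1|^2}=-\frac{|c_2|^2}{1+|c_1|},$$
which is the claimed bound.

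The only step needing care is the second-order expansion of the Möbius composition in Step 3. A wrong coefficient there (in particular, the factor $\overline{c_1}$ and the power of $1-|c_1|^2$) would spoil the cancellation that produces $1+|c_1|$ in the denominator. I would check it by clearing denominators: compare coefficients in $\omega\,(1-\overline{c_1}\psi)=\psi-c_1$ rather than expanding a geometric series.
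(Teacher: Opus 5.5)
Your proof is correct and complete. The paper does not prove this lemma; it only cites Carlson (1940). Your argument supplies the missing proof, and it is the standard one: one step of the Schur algorithm ($\psi=\varphi/z$, then $\omega=(\psi-c_1)/(1-\overline{c_1}\psi)$), followed by applying the first two inequalities to $\omega$.

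The details hold up:
\begin{itemize}
\item The denominator $1-\overline{c_1}\psi$ never vanishes, because $|\overline{c_1}\psi|\le|c_1|<1$. Hence $\omega$ is a genuine Schwarz function.
\item Comparing coefficients in $\omega(1-\overline{c_1}\psi)=\psi-c_1$ gives exactly $d_1=c_2/(1-|c_1|^2)$ and $d_2=c_3/(1-|c_1|^2)+\overline{c_1}c_2^2/(1-|c_1|^2)^2$.
\item Step 5 may use the second inequality for an arbitrary Schwarz function, since your Step 2 already handled the degenerate case where the first coefficient has modulus one.
\end{itemize}

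Compared with the paper's bare citation, your route is self-contained and yields more. The intermediate inequality $\bigl|c_3(1-|c_1|^2)+\overline{c_1}c_2^2\bigr|\le(1-|c_1|^2)^2-|c_2|^2$ is the exact constraint on $c_3$ once admissible $c_1,c_2$ with $|c_1|<1$ are fixed. The stated Carlson bound is what remains after the triangle inequality discards the phase of $\overline{c_1}c_2^2$. That weaker, modulus-only form is all the paper needs in Section 5, where it bounds the Hankel expression term by term.
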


In 1981, Prokhorov et al. \cite{ProkhorovSzynal1981} obtained a coefficient inequality for functions in $\mathcal{B}$.

\begin{lemma}\cite{ProkhorovSzynal1981}\label{p1-lemma-015}
Let $\varphi(z)\in \mathcal{B}$ be of the form (\ref{Schwarz function-1}). Then for any real $\mu $ and $\nu$,we have 
$$
|c_3+\mu\,c_1c_2+\nu\,c_1^{3}|=
\begin{cases}
1, & \text{if } (\mu,\nu)\in D_1\cup D_2 \cup \{(2,1)\}, \\[6pt]
|\nu|, & \text{if } (\mu,\nu)\in \displaystyle\bigcup_{k=3}^{7} D_k, \\[8pt]
\end{cases}
$$
The regions $D_k  ~~ \mbox{for}~~~k=1,\dots,7$ are defined as follows:
\begin{align*}
& D_1=\{(\mu,\nu):|\mu|\le \tfrac12,\;|\nu|\le 1\},\\
& D_2=\{(\mu,\nu):\tfrac12\le|\mu|\le 2,\; \tfrac{4}{27}(|\mu|+1)^3-(|\mu|+1)\le|\nu|\le 1\},\\
& D_3=\{(\mu,\nu):|\mu|\le\tfrac12,\;\nu\le -1\},\\
& D_4=\{(\mu,\nu):|\mu|\ge\tfrac12,\;\nu\le -\tfrac23(|\mu|+1)\},\\
& D_5=\{(\mu,\nu):|\mu|\le 2,\;|\nu|\ge 1\},\\
& D_6=\{(\mu,\nu):2\le|\mu|\le 4,\;|\nu|\ge \tfrac1{12}\sqrt{\mu^2+8}\},\\
& D_7=\{(\mu,\nu):|\mu|\ge 4,\;|\nu|\ge \tfrac23(|\mu|-1)\}. 
\end{align*}
Moreover, all inequalities are sharp.
\end{lemma}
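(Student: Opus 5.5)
The plan is to parametrize the coefficients of $\varphi\in\mathcal{B}$ by the Schur algorithm, which gives a sharpened form of Lemma~\ref{p1-lemma-010}. The functional then becomes an explicit expression in a few free parameters, which we maximize region by region. Let $\Phi(\varphi)=c_3+\mu c_1c_2+\nu c_1^3$. Replacing $\varphi(z)$ by $e^{-i\theta}\varphi(e^{i\theta}z)$ multiplies $\Phi$ by $e^{2i\theta}$, so we may assume $c_1=a\in[0,1]$.

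The Schur algorithm gives the representation
\begin{equation*}
c_2=(1-a^2)\zeta,\qquad c_3=(1-a^2)(1-|\zeta|^2)\eta-(1-a^2)a\,\bar{\zeta}\,\zeta^2/|\zeta|^2\cdot|\zeta|^2 ,
\end{equation*}
that is, $c_3=(1-a^2)\bigl((1-|\zeta|^2)\eta-a\bar{c_1}\zeta^2/a\bigr)$. With $c_1=a$ real this simplifies to $c_3=(1-a^2)(1-|\zeta|^2)\eta-a(1-a^2)\zeta^2$, where $\zeta,\eta\in\overline{\mathbb{D}}$ are arbitrary. I would double-check this formula at the start, since the whole computation rests on it.

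Substituting, and using the freedom in $\eta$, gives
\begin{equation*}
\max|\Phi|=\max_{a\in[0,1],\,|\zeta|\le 1}\Bigl[(1-a^2)(1-|\zeta|^2)+\bigl|\nu a^3+\mu a(1-a^2)\zeta-a(1-a^2)\zeta^2\bigr|\Bigr].
\end{equation*}
Writing $\zeta=re^{it}$, the inner modulus is a trigonometric polynomial in $t$ of degree $2$, so the maximum over $t$ can be found explicitly. The problem then reduces to maximizing a function $G(a,r)$ on $[0,1]^2$.

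For the lower bounds, the function $\varphi(z)=z^3$ gives $|\Phi|=1$ and $\varphi(z)=z$ gives $|\Phi|=|\nu|$. So in each stated region it suffices to prove $G\le 1$, respectively $G\le|\nu|$; this also settles sharpness. The boundary values already fit this picture: $a=0$ gives $G=1-r^2\le 1$, and $a=1$ gives $G=|\nu|$.

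The remaining work is the case analysis in the interior. For $(\mu,\nu)\in D_1\cup D_2\cup\{(2,1)\}$, I would bound the modulus by the triangle inequality, $a|\nu|a^2+a(1-a^2)(|\mu|r+r^2)$. Maximizing in $r$ gives a one-variable inequality in $a$. The cubic $\tfrac4{27}(|\mu|+1)^3-(|\mu|+1)$ should come out as the discriminant condition that keeps this inequality below $1$. For $D_3$ and $D_4$ (negative $\nu$), I would choose $t$ so that the terms align. Then I would show that $G(a,r)-|\nu|$ factors with a factor $(1-a)$ and has a nonpositive cofactor exactly when $\nu\le-\tfrac23(|\mu|+1)$, respectively $\nu\le-1$. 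For $D_5$, $D_6$ and $D_7$ (large $|\nu|$), I would compare $G$ with $|\nu|$ via a quadratic in $r$ whose vertex condition yields $\tfrac1{12}\sqrt{\mu^2+8}$ and $\tfrac23(|\mu|-1)$.

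The main obstacle I expect is the exact maximization over $t$ when the terms do not align, together with checking that the region boundaries match the stated curves. For this I would first try the reduction of Prokhorov and Szynal: treat $G$ for fixed $a$ as a function of $r$ and handle the critical-point and endpoint cases separately.
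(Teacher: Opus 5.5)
The paper gives no proof of this lemma; it only cites Prokhorov--Szynal. Your reduction is correct as far as it goes: with $c_1=a\ge0$ you have $c_2=(1-a^2)\zeta$ and $c_3=(1-a^2)\bigl((1-|\zeta|^2)\eta-a\zeta^2\bigr)$, so $\max|\Phi|$ is the maximum of $G=(1-a^2)(1-r^2)+|\nu a^3+\mu a(1-a^2)\zeta-a(1-a^2)\zeta^2|$ over $a,r\in[0,1]$, $\zeta=re^{it}$.

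The first gap is that the statement you set out to prove is false as transcribed. The step ``in each stated region it suffices to prove $G\le1$, resp.\ $G\le|\nu|$'' therefore cannot succeed, and your own extremal function already shows it. The point $(\mu,\nu)=(2,0.3)$ lies in the stated $D_6$, since $\frac1{12}\sqrt{12}\approx0.289$, yet $\varphi(z)=z^3$ gives $|\Phi|=1>|\nu|$. Similarly, $(2,-1)$ lies in the stated $D_2$ and $D_5$. But $\varphi(z)=z(a-z)/(1-az)$ (your $\zeta=-1$) has $c_1=a$, $c_2=a^2-1$, $c_3=a^3-a$, hence $|\Phi|=3a-2a^3$, which equals $\sqrt2$ at $a=1/\sqrt2$.

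In Prokhorov--Szynal the conclusion is an upper bound for $|\Phi|$ (not an equality), and $\nu$ is signed in the following regions ($D_1$, $D_3$, $D_4$ are as stated):
$D_2=\{\frac12\le|\mu|\le2,\ \frac4{27}(|\mu|+1)^3-(|\mu|+1)\le\nu\le1\}$,
$D_5=\{|\mu|\le2,\ \nu\ge1\}$,
$D_6=\{2\le|\mu|\le4,\ \nu\ge\frac1{12}(\mu^2+8)\}$,
$D_7=\{|\mu|\ge4,\ \nu\ge\frac23(|\mu|-1)\}$.
You can see where these thresholds come from by expanding $G-\nu$ to first order in $1-a$. This gives $(1-a)\bigl(-3\nu+2(1+\mu u-2u^2)\bigr)$ with $u=r\cos t\in[-1,1]$, which yields exactly $\frac1{12}(\mu^2+8)$ and $\frac23(|\mu|-1)$, with no square root.

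The second gap is in your treatment of $D_1\cup D_2$, and it persists for the corrected lemma. The triangle-inequality bound $T=(1-a^2)(1-r^2)+|\nu|a^3+a(1-a^2)(|\mu|r+r^2)$ is invariant under $\nu\mapsto-\nu$. It equals $\max_{|\zeta|=r}G$ exactly when $\nu\le0$: take $\zeta=-r$ for $\mu\ge0$ and $\zeta=r$ for $\mu<0$, so all three terms are $\le0$. So the cubic does appear, but only as the negative-$\nu$ boundary of $D_2$. Already at $r=1$, the condition $\max_a\bigl((|\mu|+1)a-(|\mu|+1-|\nu|)a^3\bigr)\le1$ forces $|\nu|\le(|\mu|+1)-\frac4{27}(|\mu|+1)^3$. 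For $\frac12<|\mu|\le2$ this lies strictly below the true upper edge $\nu=1$ of $D_2$. For example, at $(1,1)$ you get $\max_a(2a-a^3)=\frac43\sqrt{2/3}>1$. At $(2,1)$ you get $\max_a(3a-2a^3)=\sqrt2$, although the true maximum is $1$ there, since $c_3+2c_1c_2+c_1^3=p_3/2$ with $p=(1+\varphi)/(1-\varphi)$.

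For $\nu>0$ and $\mu\ne0$ the three terms cannot be aligned, so you must actually maximize over $t$. With $A=\nu a^3$, $B=\mu a(1-a^2)r$, $C=-a(1-a^2)r^2$ one has $|A+Be^{it}+Ce^{2it}|^2=(A-C)^2+B^2+2B(A+C)x+4ACx^2$. This is a concave quadratic in $x=\cos t$ whose maximizer is in general interior, and you must then optimize over $(a,r)$. So the step you postponed as the main obstacle is not a technicality. It is the core of the argument on the whole half-plane $\nu>0$: the positive part of $D_2$, together with $D_5$--$D_7$. Only $D_1$, $D_3$, $D_4$ and the negative part of $D_2$ yield to the aligned bound.
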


\section{Logarithmic Coefficients and Logarithmic Inverse Coefficients}\label{section-3}

Let   $ f \in \mathcal{W}{(\alpha)}$ for $\alpha \geq 0$, be of  the form \eqref{P1-eq-01}. Therefore, there exist a function  $p \in \mathcal{P}$ of the form \eqref{P Class-1} such that
\begin{equation}\label{P1-eq-mr-1}
 f'(z)+\alpha z f''(z) = p(z).   
\end{equation}
Also in view of subordination, there is a Schwarz function $\varphi \in \mathcal{B}$ of the form \eqref{Schwarz function-1}, such that 
\begin{equation}\label{P1-eq-mr-02}
  p(z) = \frac{1 + \varphi(z)}{ 1 - \varphi(z)}.  
\end{equation}
Using the power series representation of $f$ and $p$ and comparing the coefficients of $z^n$, from \eqref{P1-eq-mr-1} we get 
\begin{equation}\label{P1-eq-mr-10}
    (n + 1)( 1 + n \alpha)a_{n + 1}  = p_n, ~~~~~~~n\ge 1. 
\end{equation}
So by Carathéodory estimate $|p_n|\le 2$, for $n \geq 1$, from \eqref{P1-eq-mr-10}, we obtain 
\begin{equation}
|a_{n+1}|\le \frac{2}{(n+1)(1+n\alpha)}, \qquad n\ge 1 
\end{equation}
for any function $f \in \mathcal{W}(\alpha)$.

With the help of the observation above, we find the sharp bounds of the logarithmic coefficients for functions in the class $\mathcal{W}{(\alpha)}$ for $ \alpha \geq 0 $. Here we want to mention that, in 2022 Anand et. al \cite{Anand-2022} discussed the bounds of first three logarithmic coefficients (i.e, $|\gamma_1|, |\gamma_2|$ and $|\gamma_3|$) for the functions in the class $\mathcal{W}(\alpha)$ for $Re\,({\alpha}) \geq 0$. However, they don't discuss about the extreme function for $|\gamma_3|$ explicitly, and   the exact forms of the extreme functions for $|\gamma_1|$ and $|\gamma_2|$ are not mentioned.  
In the first main result of this section, we determine the sharp bounds $|\gamma_1|, |\gamma_2|$, $|\gamma_3|$ and construct extreme functions explicitly.

\begin{theorem}
Let $f(z)\in \mathcal{W}(\alpha)$ for $\alpha \geq 0$, be of the form \eqref{P1-eq-01}. Then  the first three logarithmic coefficients satisfy the sharp estimates 
\begin{align*}
    & |\gamma_1|\le \dfrac{1}{2(1+\alpha)},\\
    & |\gamma_2|\le \dfrac{1}{3(1+2\alpha)},\\
    & |\gamma_3|\le \dfrac{1}{4(1+3\alpha)}.
\end{align*}
\end{theorem}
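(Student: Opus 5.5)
The plan is to reduce everything to the coefficients of the Carath\'eodory function $p$ via \eqref{P1-eq-mr-10}, and then to the Schwarz function $\varphi$ via \eqref{P1-eq-mr-02}. From \eqref{P1-eq-mr-10} we have
$$a_2=\frac{p_1}{2(1+\alpha)},\qquad a_3=\frac{p_2}{3(1+2\alpha)},\qquad a_4=\frac{p_3}{4(1+3\alpha)}.$$

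\textbf{Bound for $\gamma_1$.} Since $\gamma_1=a_2/2$, the bound $|\gamma_1|\le 1/(2(1+\alpha))$ follows at once from $|p_1|\le 2$. Equality holds for the $f$ determined by $f'+\alpha zf''=(1+z)/(1-z)$.

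\textbf{Bound for $\gamma_2$.} Substituting into \eqref{p1-eq-002} gives
$$\gamma_2=\frac{1}{6(1+2\alpha)}\bigl(p_2-v\,p_1^2\bigr),\qquad v=\frac{3(1+2\alpha)}{8(1+\alpha)^2}.$$
We have $v\ge 0$. Also $v\le 1$, because $8(1+\alpha)^2-3-6\alpha=8\alpha^2+10\alpha+5>0$. So Lemma \ref{p1-lemma-001} gives $|p_2-vp_1^2|\le 2$, hence $|\gamma_2|\le 1/(3(1+2\alpha))$. For $0<v<1$, equality occurs for $p(z)=(1+z^2)/(1-z^2)$. The extremal function is the $f\in\mathcal{W}(\alpha)$ solving $f'+\alpha zf''=(1+z^2)/(1-z^2)$, which has $a_2=0$ and $a_3=2/(3(1+2\alpha))$.

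\textbf{Bound for $\gamma_3$: rewriting in Schwarz coefficients.} Here I would switch to the Schwarz coefficients, using
$$p_1=2c_1,\qquad p_2=2(c_2+c_1^2),\qquad p_3=2(c_3+2c_1c_2+c_1^3).$$
Inserting these into $\gamma_3=\tfrac12(a_4-a_2a_3+\tfrac13a_2^3)$, I expect to get
$$\gamma_3=\frac{1}{4(1+3\alpha)}\bigl(c_3+\mu c_1c_2+\nu c_1^3\bigr).$$
The parameters are $\mu=2-t$ and $\nu=1-t+s$, where
$$t=\frac{4(1+3\alpha)}{3(1+\alpha)(1+2\alpha)}\in(0,4/3],\qquad s=\frac{2(1+3\alpha)}{3(1+\alpha)^3}>0.$$
As a check, at $\alpha=0$ this gives $\mu=2/3$ and $\nu=1/3$.

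\textbf{Bound for $\gamma_3$: placing $(\mu,\nu)$ in $D_2$.} The goal is to show that $(\mu,\nu)\in D_2$ for every $\alpha\ge0$. Then Lemma \ref{p1-lemma-015} gives $|c_3+\mu c_1c_2+\nu c_1^3|\le 1$, which yields the claimed bound. The verification has three parts.
\begin{itemize}
\item First, $\mu\in[2/3,2)\subset[1/2,2]$.
\item Second, $\nu\le 1$ is equivalent to $s\le t$, i.e.\ $(1+2\alpha)\le 2(1+\alpha)^2$, which is clear.
\item Third, the lower boundary condition of $D_2$. With $|\mu|+1=3-t$, a direct expansion gives
$$\tfrac{4}{27}(3-t)^3-(3-t)=1-3t+\tfrac43t^2-\tfrac4{27}t^3 .$$
Since $\nu>1-t$, it suffices that $t\bigl(2-\tfrac43t+\tfrac4{27}t^2\bigr)\ge0$. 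The quadratic factor has roots $t=(9\pm\sqrt{27})/2$, the smaller being about $1.90$, so it is positive on $(0,4/3]$.
\end{itemize}
I would double-check this step most carefully, including that the case $\alpha\to\infty$, where $(\mu,\nu)\to(2,1)$, is harmless. I would also keep in mind that $\nu\ge -1$ is automatic.

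\textbf{Sharpness for $\gamma_3$.} Take $\varphi(z)=z^3$, i.e.\ $p(z)=(1+z^3)/(1-z^3)$. This gives $c_1=c_2=0$ and $c_3=1$, hence $a_2=a_3=0$ and $|\gamma_3|=a_4/2=1/(4(1+3\alpha))$. The extremal $f$ is obtained by integrating $f'+\alpha zf''=p$ termwise.

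The main obstacle is the coefficient bookkeeping for $\mu,\nu$ and the region-membership check. Everything else is a direct application of the lemmas.
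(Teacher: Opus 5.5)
Your proposal is correct and follows the paper's proof essentially step for step: you use the Carath\'eodory bound $|p_1|\le 2$ for $\gamma_1$, Lemma~\ref{p1-lemma-001} with $v=3(1+2\alpha)/(8(1+\alpha)^2)<1$ for $\gamma_2$, and Lemma~\ref{p1-lemma-015} with the same $(\mu,\nu)\in D_2$ for $\gamma_3$, and your extremal functions are the paper's ones, generated by $p(z)=(1+z^k)/(1-z^k)$, $k=1,2,3$. Where you go beyond the paper is the $D_2$ membership, which the paper only asserts after a ``lengthy calculation'': your reduction $\nu>1-t$, together with positivity of $t\bigl(2-\tfrac43t+\tfrac4{27}t^2\bigr)$ on $(0,4/3]$, actually proves it, and it holds whether $D_2$ is read with $\nu$ or $|\nu|$, since $-1/3<\nu\le 1$.
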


\begin{proof}

Let $f\in\mathcal{W}{(\alpha)}$ be of the form \eqref{P1-eq-01}. Thus  from \eqref{P1-eq-mr-10} we get
\begin{equation} \label{P1-eq-mr-15}
 a_{n+1}=\frac{p_n}{(n+1)(1+n\alpha)},\qquad n\ge1.   
\end{equation}

In view of \eqref{p1-eq-002} and \eqref{P1-eq-mr-15} we get 
\begin{align} 
& \gamma_1=\frac{p_1}{4(1+\alpha)}, ~~~~~~ \gamma_2=\frac{p_2}{6(1+2\alpha)}-\frac{p_1^2}{16(1+\alpha)^2}, \label{P1-eq-mr-20}\\
& \gamma_3=\frac{p_3}{8(1+3\alpha)}-\frac{p_1p_2}{12(1+\alpha)(1+2\alpha)}
+\frac{p_1^3}{48(1+\alpha)^3}. \label{P1-eq-mr-25}
\end{align}
Since $|p_n|\le 2$ for $n \geq 1$, from \eqref{P1-eq-mr-20} we get 
$$
|\gamma_1|\le \frac{2}{4(1+\alpha)}=\frac{1}{2(1+\alpha)},
$$
and equality holds for $ f_1 \in \mathcal{W}(\alpha)$ defined by 
\begin{equation}\label{P1-eq-mr-26}
f_1(z)=z+\sum_{n=2}^{\infty}\dfrac{2}{n\big(1+(n-1)\alpha\big)}\,z^n.    
\end{equation}
Furthermore,  from \eqref{P1-eq-mr-20} we get 
$$|\gamma_2|=\frac{1}{6(1+2\alpha)}\left|p_2-\frac{3(1+2\alpha)}{8(1+\alpha)^2}\,p_1^2\right|.$$
For $\alpha \geq 0$, it is easy to check that 
$$
\frac{3(1+2\alpha)}{8(1+\alpha)^2} = \frac{3}{8(1 + \alpha)} + \frac{3 \alpha}{8(1 + \alpha)^2} \leq \frac{3}{4} < 1. 
$$
Therefore, in view of  Lemma \ref{p1-lemma-001}, we get
$$|\gamma_2|\le\frac{2}{6(1+2\alpha)} = \frac{1}{3(1+2\alpha)}, $$
and  equality holds for $f_2 \in \mathcal{W}(\alpha)$ defined by
\begin{equation}\label{P1-eq-mr-27}
 f_2(z)=z+\sum_{n=1}^{\infty}\dfrac{2}{(2n+1)(1+2n\alpha) }z^{2n+1}.   
\end{equation}
Also from \eqref{P1-eq-mr-25}, we get 
\begin{equation}\label{P1-eq-mr-30}
 |\gamma_3|= \left|\frac{p_3}{8(1+3\alpha)}-\frac{p_1p_2}{12(1+\alpha)(1+2\alpha)}
+\frac{p_1^3}{48(1+\alpha)^3}\right|.   
\end{equation}
It is observed that by equating the coefficients of $z, z^2$ and $z^3$ in \eqref{P1-eq-mr-02}, we obtain 
\begin{equation}\label{P1-eq-mr-34}
 p_1=2c_1,~~~~p_2=2(c_2+c_1^2), ~~~~p_3=2(c_3+2c_1c_2+c_1^3).
 \end{equation}
Using \eqref{P1-eq-mr-34} in \eqref{P1-eq-mr-30}, we get
\begin{align}\label{P1-eq-mr-35}
|\gamma_3
|& =\left|\frac{c_3+2c_1c_2+c_1^3}{4(1+3\alpha)}
-\frac{c_1c_2+c_1^3}{3(1+\alpha)(1+2\alpha)}
+\frac{c_1^3}{6(1+\alpha)^3}\right|\\ \nonumber 
& = \dfrac{1}{4(1+3\alpha)}
\left|c_3+ \mu c_1 c_2 + \nu c_1^{3}\right|,
\end{align}
where
\begin{align*}
 & \mu= 2 - \dfrac{4(1+3\alpha)}{3(1+\alpha)(1+2\alpha)}\\ 
 & \nu = 1 - \dfrac{4(1+3\alpha)}{3(1+\alpha)(1+2\alpha)}
  + \dfrac{2(1+3\alpha)}{3(1+\alpha)^{3}}.
\end{align*}
We now use Lemma  \ref{p1-lemma-015} to  estimate the third logarithmic coefficient $\gamma_{3}$.
For $\alpha \geq 0$, a lengthy but straightforward calculation shows that 
$$
\frac{2}{3} \leq \mu < 2 ~~~~ \mbox{and}~~~~\dfrac{4}{27}(|\mu|+1)^3-(|\mu|+1)\le|\nu|\le 1.
$$
Thus $(\mu, \nu) \in D_2$ defined as in   Lemma  \ref{p1-lemma-015}. Hence 
$$
\left|c_3+ \mu c_1 c_2 + \nu c_1^{3}\right| \leq 1.
$$
Therefore, from \eqref{P1-eq-mr-35},  we get $$|\gamma_3|
\le\dfrac{1}{4(1+3\alpha)},$$
and the equality holds for the function  $f_3 \in \mathcal{W}(\alpha)$ defined by 
\begin{equation}\label{P1-eq-mr-36}
 f_3(z)=z+\sum_{n=1}^{\infty}\frac{2}{(3n+1)(1+3n\alpha)}\,z^{3n+1}.   
\end{equation}
\end{proof}

Now we shall discuss the logarithmic inverse coefficients of the functions in $\mathcal{W}(\alpha)$ for $\alpha \geq 0$.

\begin{theorem}
Let  $f \in \mathcal{W}(\alpha)$ for $\alpha \geq 0$, be of the form \eqref{P1-eq-01} and $F$ be the inverse function of $f$ have the form  \eqref{eq:inverse-expansion}. Then the first three logarithmic inverse coefficients of $f$ satisfy the sharp estimates 
\begin{align*}
& |\Gamma_1|\le \frac{1}{2(1+\alpha)}, ~~~~~~~~~~~~ \alpha\geq 0 ,\\
& |\Gamma_2|\le 
\begin{cases}
\dfrac{5+10\alpha-4\alpha^2}{12(1+2\alpha)(1+\alpha)^2}, & 0\le\alpha < 1/2, \\
  \dfrac{1}{3(1+2\alpha)}, & \alpha \ge\,1/2,  
\end{cases}\\
&|\Gamma_3|
\le \begin{cases}
    \dfrac{1}{4(1+3\alpha)}-\dfrac{4}{3(1+\alpha)(1+2\alpha)}+\dfrac{5}{3(1+\alpha)^3}, & 0 \leq  \alpha\le 0.8090,\\
  \dfrac{1}{4(1+3\alpha)}, & \alpha> 0.8090.
\end{cases}
\end{align*}
\end{theorem}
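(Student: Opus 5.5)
We work with the representation \eqref{P1-eq-mr-15}, i.e.
$$a_2=\frac{p_1}{2(1+\alpha)},\qquad a_3=\frac{p_2}{3(1+2\alpha)},\qquad a_4=\frac{p_3}{4(1+3\alpha)},$$
and substitute it into \eqref{eq:Gamma-coefficients}. We then treat each coefficient separately, with the same tools used for $\gamma_1,\gamma_2,\gamma_3$ in the previous theorem.

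\emph{The bound for $\Gamma_1$.} Since $\Gamma_1=-a_2/2=-p_1/(4(1+\alpha))$, the estimate $|p_1|\le 2$ gives $|\Gamma_1|\le 1/(2(1+\alpha))$. Equality holds for $f_1$ from \eqref{P1-eq-mr-26}.

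\emph{The bound for $\Gamma_2$.} We write
$$\Gamma_2=-\frac{1}{6(1+2\alpha)}\bigl(p_2-v\,p_1^2\bigr),\qquad v=\frac{9(1+2\alpha)}{8(1+\alpha)^2}>0 .$$
The inequality $v\ge 1$ is equivalent to $8\alpha^2-2\alpha-1\le 0$, that is, to $\alpha\le 1/2$. We then apply Lemma \ref{p1-lemma-001} in two cases.
\begin{itemize}
\item For $0\le \alpha<1/2$ we have $v>1$, so the bound is $(4v-2)/(6(1+2\alpha))$. This simplifies to $(5+10\alpha-4\alpha^2)/\bigl(12(1+2\alpha)(1+\alpha)^2\bigr)$. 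Equality holds for $p(z)=(1+z)/(1-z)$, i.e. for $f_1$.
\item For $\alpha\ge 1/2$ we have $0<v\le 1$, so the bound is $2/(6(1+2\alpha))=1/(3(1+2\alpha))$. Equality holds for $p(z)=(1+z^2)/(1-z^2)$, i.e. for $f_2$ from \eqref{P1-eq-mr-27}.
\end{itemize}

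\emph{The bound for $\Gamma_3$.} We pass to the Schwarz function by means of \eqref{P1-eq-mr-34}, exactly as was done for $\gamma_3$. Here $p_1p_2=4(c_1c_2+c_1^3)$ and $a_2^3=c_1^3/(1+\alpha)^3$. This should give
$$|\Gamma_3|=\frac{1}{4(1+3\alpha)}\bigl|c_3+\mu c_1c_2+\nu c_1^3\bigr|,$$
with
$$\mu=2-\frac{16(1+3\alpha)}{3(1+\alpha)(1+2\alpha)},\qquad \nu=1-\frac{16(1+3\alpha)}{3(1+\alpha)(1+2\alpha)}+\frac{20(1+3\alpha)}{3(1+\alpha)^3}.$$
Lemma \ref{p1-lemma-015} then yields one of two bounds:
\begin{itemize}
\item In the regime where $(\mu,\nu)$ lies in $D_3\cup\dots\cup D_7$, the bound is $|\nu|/(4(1+3\alpha))$. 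When $\nu>0$ this is exactly the first expression in the statement. Equality is attained at $\varphi(z)=z$, i.e. for $f_1$.
\item In the regime where $(\mu,\nu)$ lies in $D_1\cup D_2$, the bound is $1/(4(1+3\alpha))$. Equality is attained at $\varphi(z)=z^3$, i.e. for $f_3$ from \eqref{P1-eq-mr-36}.
\end{itemize}
The threshold $\alpha\approx 0.8090$ should be the root of $\nu(\alpha)=1$, where the two expressions agree.

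The main obstacle is to check region membership uniformly in $\alpha$. At $\alpha=0$ we get $\mu=-10/3$ and $\nu=7/3$, and this point lies in $D_6$, since $|\nu|\ge\sqrt{\mu^2+8}/12$. As $\alpha$ increases, $\mu$ moves from about $-10/3$ up towards $2$, and $|\mu|$ crosses the value $2$. So the admissible region changes: first $D_6$, then $D_5$ while $|\nu|\ge 1$, and then $D_2$ or $D_1$ once $\nu\le 1$.

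I would carry this out as follows:
\begin{itemize}
\item Clear denominators and reduce each membership condition to a polynomial inequality in $\alpha$.
\item Show that $\nu$ is decreasing, or at least that $\nu-1$ changes sign only once, so that the threshold $0.8090$ is the unique root of $\nu(\alpha)=1$.
\item For $\alpha$ beyond the threshold, verify the lower constraint $\tfrac{4}{27}(|\mu|+1)^3-(|\mu|+1)\le|\nu|$ defining $D_2$, and the case $|\mu|\le 1/2$ separately (this is $D_1$).
\item Check that the transitions between regions ($|\mu|=2$, $|\mu|=1/2$) are consistent with the claimed case split at $0.8090$.
\end{itemize}
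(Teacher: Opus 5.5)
Your proposal follows the paper's proof essentially step for step: the same Carath\'eodory estimate for $\Gamma_1$; Lemma~\ref{p1-lemma-001} with $v=9(1+2\alpha)/(8(1+\alpha)^2)$ for $\Gamma_2$, where your direction $v\ge 1\iff \alpha\le 1/2$ is the correct one and the paper's display \eqref{P1-eq-mr-65} states it reversed; and Lemma~\ref{p1-lemma-015} with the same $\mu,\nu$ and the same extremals $f_1,f_2,f_3$ for $\Gamma_3$. The region-membership check you defer is likewise only asserted in the paper, and your threshold is correct: $\nu=1$ reduces to $4\alpha^2-2\alpha-1=0$, i.e.\ $\alpha=(1+\sqrt5)/4\approx 0.8090$.
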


\begin{proof}
Let  $f\in\mathcal{W}{(\alpha)}$  be of the form \eqref{P1-eq-01}. Therefore from \eqref{P1-eq-mr-10} we get 
\begin{equation}\label{P1-eq-mr-40}
 a_{n+1}=\frac{p_n}{(n+1)(1+n\alpha)} ~~~~~n\ge1.   
\end{equation}
In view of \eqref{eq:Gamma-coefficients} and \eqref{P1-eq-mr-40}
we obtain
\begin{align}
 & \Gamma_1=-\frac{p_1}{4(1+\alpha)} \label{P1-eq-mr-45}\\  
  &\Gamma_2=-\frac{p_2}{6(1+2\alpha)}+\frac{3p_1^2}{16(1+\alpha)^2}  \label{P1-eq-mr-50}\\
 & \Gamma_3=-\frac{p_3}{8(1+3\alpha)}+\frac{p_1p_2}{3(1+\alpha)(1+2\alpha)} -\frac{5p_1^3}{24(1+\alpha)^3}.\label{P1-eq-mr-55}
\end{align}
Since $|p_1|\le 2$, from \eqref{P1-eq-mr-45}, we get 
$$\
|\Gamma_1|\le \frac{2}{4(1+\alpha)}=\frac{1}{2(1+\alpha)},
$$
and equality holds for the inverse function of  $f_1 \in \mathcal{W}(\alpha)$ defined by \eqref{P1-eq-mr-26}.

From \eqref{P1-eq-mr-50}, we get 
\begin{align}\label{P1-eq-mr-60}
|\Gamma_2| & = \left|-\frac{p_2}{6(1+2\alpha)}+\frac{3p_1^2}{16(1+\alpha)^2} \right|\\ \nonumber 
& = \frac{1}{6(1+2\alpha)}\left|p_{2}-\frac{9(1+2\alpha)}{8(1+\alpha)^2}\,p_{1}^2\right|.
\end{align}
 For $\alpha \geq 0$, one can verify that 
\begin{equation}\label{P1-eq-mr-65}
  \frac{9(1+2\alpha)}{8(1+\alpha)^2} \leq 1 ~~~ \mbox{for}~~~ 0\leq \alpha \leq \frac{1}{2} ~~~\mbox{and} ~~~\frac{9(1+2\alpha)}{8(1+\alpha)^2} > 1 ~~~ \mbox{for} ~~~ \alpha > \frac{1}{2}.  
\end{equation}
By Lemma \ref{p1-lemma-001} and \eqref{P1-eq-mr-65}, from \eqref{P1-eq-mr-60} we get 
\begin{align*}
|\Gamma_2| &  =   \frac{1}{6(1+2\alpha)}\left|p_{2}-\frac{9(1+2\alpha)}{8(1+\alpha)^2}\,p_{1}^2\right|\\
& \leq   \frac{1}{6(1+2\alpha)} \left( \frac{9(1+2\alpha)}{8(1+\alpha)^2} - 2\right) \\ 
& = \frac{5+10\alpha-4\alpha^2}{12(1+2\alpha)(1+\alpha)^2} ~~~ \mbox{for}~~~~ 0 \le\alpha \leq  1/2,
\end{align*}
and equality holds for the inverse function of  $f_1 \in \mathcal{W}(\alpha)$ defined by \eqref{P1-eq-mr-26}.
Also by Lemma \ref{p1-lemma-001} and \eqref{P1-eq-mr-65}, from \eqref{P1-eq-mr-60} we get
\begin{align*}
|\Gamma_2| &  =   \frac{1}{6(1+2\alpha)}\left|p_{2}-\frac{9(1+2\alpha)}{8(1+\alpha)^2}\,p_{1}^2\right|\\
& \leq  \frac{1}{3(1+2\alpha)} ~~~ \mbox{for}~~~\alpha>  1/2,
\end{align*}
and equality holds for the inverse function of $f_2 \in \mathcal{W}(\alpha)$ defined by  \eqref{P1-eq-mr-27}.


From \eqref{P1-eq-mr-55}, We get,
\begin{equation}\label{P1-eq-mr-70}
 |\Gamma_3| =\left|\frac{p_3}{8(1+3\alpha)}- \frac{p_1p_2}{3(1+\alpha)(1+2\alpha)} +\frac{5p_1^3}{24(1+\alpha)^3}\right|.   
\end{equation}
Using \eqref{P1-eq-mr-34} in \eqref{P1-eq-mr-70}, we obtain 
\begin{equation}\label{P1-eq-mr-75}
 |\Gamma_3|
= \dfrac{1}{4(1+3\alpha)}
\left|c_3+ \mu c_1 c_2 + \nu c_1^{3}\right|,   
\end{equation}
where $\mu$ and $\nu$ are defined as 
\begin{align}
    & \mu= 2 - \dfrac{16(1+3\alpha)}{3(1+\alpha)(1+2\alpha)}, \nonumber\\ 
    & \nu= 1 - \dfrac{16(1+3\alpha)}{3(1+\alpha)(1+2\alpha)}
  + \dfrac{20(1+3\alpha)}{3(1+\alpha)^{3}} \label{P1-eq-mr-72}.
\end{align}
We now use Lemma  \ref{p1-lemma-015} to  estimate the third logarithmic coefficient $\gamma_{3}$. For $\alpha \geq 0$, a detailed  and deep  calculation shows that 
 \begin{itemize}
    \item $(\mu, \nu) \in D_1$ when $\alpha \in [1.9854, 4.14184]$.
    \item $(\mu, \nu) \in D_2 $ when $\alpha \in [0.809, 1.9854] \cup [4.1418,\infty)$.
    \item $(\mu, \nu) \in D_5 $ when $\alpha \in [0.7207, 0.809]$.
    \item $(\mu, \nu) \in D_6 $ when $\alpha \in [0, 0.7207]$.
\end{itemize}
This shows that the sets $D_1, D_2, D_5$ and $D_6$ are nonempty and together they cover all possible values of $\alpha \geq 0$.  
Now we consider the following cases:

\noindent \textbf{Case-I}: Let $(\mu, \nu) \in D_1 \cup D_2$, then by Lemma  \ref{p1-lemma-015}
$$\left|\,c_{3}+\mu\,c_{1}c_{2}+\nu\,c_{1}^{3}\right| \leq 1.$$  So from \eqref{P1-eq-mr-75}, we get 
$$
|\Gamma_3| \leq \dfrac{1}{4(1+3\alpha)}~~~ \mbox{for}~~~ \alpha\geq 0.809.
$$
The inequality is sharp for the inverse function of $f_3 \in \mathcal{W}(\alpha)$  defined by \eqref{P1-eq-mr-36}.

\noindent \textbf{Case-II}: Let $(\mu, \nu) \in D_5 \cup D_6$, then by Lemma  \ref{p1-lemma-015}
$$\left|\,c_{3}+\mu\,c_{1}c_{2}+\nu\,c_{1}^{3}\right| \leq |\nu|.$$
So from \eqref{P1-eq-mr-75}, we get 
\begin{equation}\label{P1-eq-mr-80}
|\Gamma_3| \leq \dfrac{1}{4(1+3\alpha)} |\nu|,~~~ \mbox{for}~~~ 0 \leq \alpha < 0.809,
\end{equation}
where $\nu$ is defined by \eqref{P1-eq-mr-72}. By differentiate $\nu$ with respect to $\alpha$, we get 
$$
\nu' = \dfrac{8 \alpha}{3 (1 + \alpha)^4(1 + 2 \alpha)^2} (12 \alpha^3 - 28 \alpha^2 -32 \alpha -7).
$$
It is easy to check that $$
\nu' < 0 ~~~ \mbox{for}~~~ \alpha \in [0, 3.21836]~~~ \mbox{and}~~ \nu' > 0 ~~~ \mbox{for}~~~ \alpha > 3.21836.$$ 
Thus $\nu$ is decreasing for $\alpha \in [0, 3.21836]$. In particular $\nu$ is decreasing for $\alpha \in [0, 0.809]$. Also
$$
\min_{\alpha \in [0, 0.809]} \nu(\alpha) = \nu(0.809)= 0.99978 > 0.  
$$
Hence $\nu >0$  for $\alpha \in [0, 0.809]$. Therefore in view of  \eqref{P1-eq-mr-72}, from \eqref{P1-eq-mr-80}, we get 
\begin{align*}
    |\Gamma_3| & \leq \dfrac{1}{4(1+3\alpha)} \nu\\
    & = \dfrac{1}{4(1+3\alpha)}-\dfrac{4}{3(1+\alpha)(1+2\alpha)}+\dfrac{5}{3(1+\alpha)^3}~~~ \mbox{for} ~~~0 \leq \alpha< 0.809.
    \end{align*}
The inequality is sharp for the inverse of the function $f_1 \in \mathcal{W}(\alpha)$  defined by  \eqref{P1-eq-mr-26}.

\end{proof}


\section{Moduli Difference of Successive Logarithmic and Logarithmic Inverse Coefficients}\label{section-4}

In this section, we aim to determine the sharp bounds of $|\gamma_2| - |\gamma_1|$ (respectively, $|\Gamma_2| - |\Gamma_1|$ ) for $f \in \mathcal{W}(\alpha)$ for $\alpha \geq 0$, as well as for their inverse functions. We now state our first main result of this section.


\begin{theorem}
 Let $f\in \mathcal{W}(\alpha)$ be of the form \eqref{P1-eq-01}. 
 Then
 \begin{align*}
    \bigl|\,\gamma_2 \,\bigr|-\bigl|\,\gamma_1\,\bigr|\le \dfrac{1}{3(1+2\alpha)} ~~~~ \mbox{for} ~~~~ \alpha \geq 0  
 \end{align*}
and 
 \begin{align*}
    \bigl|\,\gamma_2 \,\bigr|-\bigl|\,\gamma_1\,\bigr|\ge\begin{cases}
     -\dfrac{1}{\sqrt{8\alpha^2+10\alpha+5}} & 0 \leq \alpha\le2.232\\
     -\dfrac{1}{3(1+2\alpha)}-\dfrac{3(1+2\alpha)}{4(8\alpha^2+10\alpha+5)} & \alpha>2.232.
 \end{cases} 
 \end{align*}
All these inequalities are sharp.

\end{theorem}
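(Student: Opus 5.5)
We apply Lemma \ref{p1-lemma-005} directly, since both $\gamma_1$ and $\gamma_2$ are expressed through $p_1,p_2$ by \eqref{P1-eq-mr-20}. We have
$$
|\gamma_2|-|\gamma_1| = \left|B_2p_1^2+B_3p_2\right|-\left|B_1p_1\right| = \Psi_{+}(p_1,p_2),
$$
with
$$
B_1=\frac{1}{4(1+\alpha)}\ge 0,\qquad B_2=-\frac{1}{16(1+\alpha)^2},\qquad B_3=\frac{1}{6(1+2\alpha)}\in\mathbb{R}.
$$
The lower bound is then $-\max\Psi_{-}$. The whole proof is the verification of which case of the lemma applies, followed by simplification.

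\textbf{Upper bound.} Here $2B_2+B_3=B_3-\frac{1}{8(1+\alpha)^2}$. Since $B_3>0$, we get $|2B_2+B_3|<B_3<|B_3|+B_1$, so the first alternative of the lemma never occurs. Hence $\Psi_+\le 2|B_3|=\frac{1}{3(1+2\alpha)}$. Equality is attained by $f_2$ in \eqref{P1-eq-mr-27}, which has $p_1=0$ and $p_2=2$, so that $\gamma_1=0$ and $|\gamma_2|=\frac{1}{3(1+2\alpha)}$.

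\textbf{Lower bound.} First compute
$$
B_4=|4B_2+2B_3|=\left|\frac{1}{3(1+2\alpha)}-\frac{1}{4(1+\alpha)^2}\right|=\frac{4\alpha^2+2\alpha+1}{12(1+2\alpha)(1+\alpha)^2},
$$
where the expression inside is positive for $\alpha\ge0$. This gives
$$
B_4+2|B_3|=\frac{8\alpha^2+10\alpha+5}{12(1+2\alpha)(1+\alpha)^2}.
$$
Next compare the three regimes of the lemma, after clearing the common denominator $12(1+2\alpha)(1+\alpha)^2$.

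The first regime $B_1\ge B_4+2|B_3|$ reads $3(1+2\alpha)(1+\alpha)\ge 8\alpha^2+10\alpha+5$. This is $6\alpha^2+9\alpha+3\ge 8\alpha^2+10\alpha+5$, which never holds for $\alpha\ge0$.

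The second regime $B_1^2\le 2|B_3|(B_4+2|B_3|)$ reduces to $9(1+2\alpha)^2\le 4(8\alpha^2+10\alpha+5)$, that is, $4\alpha^2-4\alpha-11\le0$. This holds exactly for $\alpha\le\frac{1+2\sqrt3}{2}\approx2.232$. In this range the bound is
$$
\frac{2B_1\sqrt{2|B_3|}}{\sqrt{B_4+2|B_3|}}=\frac{1}{2(1+\alpha)}\cdot\frac{2(1+\alpha)}{\sqrt{8\alpha^2+10\alpha+5}}=\frac{1}{\sqrt{8\alpha^2+10\alpha+5}}.
$$
For $\alpha>2.232$ the third regime gives
$$
2|B_3|+\frac{B_1^2}{B_4+2|B_3|}=\frac{1}{3(1+2\alpha)}+\frac{3(1+2\alpha)}{4(8\alpha^2+10\alpha+5)}.
$$
Negating these yields the stated lower bounds.

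\textbf{Sharpness.} This is the main point needing care, since Lemma \ref{p1-lemma-005} asserts sharpness only at the level of $(p_1,p_2)$. I would take the extremal $p\in\mathcal{P}$ from the lemma's sharpness construction. A natural choice is the two-point form $p(z)=\frac{1-z^2}{1-2tz+z^2}$ type with suitable real $p_1$, or a convex combination of $\frac{1+z}{1-z}$ and $\frac{1+z^2}{1-z^2}$. Then I would set $f(z)=\int_0^z\!\big(\text{solution of } f'+\alpha zf''=p\big)$, that is, $a_{n+1}=p_n/((n+1)(1+n\alpha))$ as in \eqref{P1-eq-mr-10}. This $f$ lies in $\mathcal{W}(\alpha)$ and realizes the extremal $(p_1,p_2)$, so equality is attained in each case.
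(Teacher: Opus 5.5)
Your proposal is correct and follows the paper's proof essentially step for step: the same choice $B_1=\frac{1}{4(1+\alpha)}$, $B_2=-\frac{1}{16(1+\alpha)^2}$, $B_3=\frac{1}{6(1+2\alpha)}$ in Lemma \ref{p1-lemma-005}, the same case analysis, and your exact threshold $\frac{1+2\sqrt{3}}{2}$ is the paper's $2.232$. For sharpness, the lemma's sharpness together with the bijection $p\mapsto f$ given by \eqref{P1-eq-mr-10} suffices; the explicit extremals (as in the paper) come from $p(z)=\frac{1-z^2}{1-2tz+z^2}$, i.e.\ $p_2=p_1^2-2$, with $2t=\frac{4(1+\alpha)}{\sqrt{8\alpha^2+10\alpha+5}}$ for $\alpha\le 2.232$ and $2t=\frac{6(1+\alpha)(1+2\alpha)}{8\alpha^2+10\alpha+5}$ for $\alpha>2.232$, whereas your convex-combination alternative is not extremal for the lower bound.
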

\begin{proof}
 Let $f\in\mathcal{W}(\alpha)$. Then from \eqref{p1-eq-002} we get 
\begin{equation}\label{P1-eq-mr-85}
    \bigl|\,\gamma_2 \,\bigr|-\bigl|\,\gamma_1\,\bigr|= \left|\frac12\Big(a_3-\frac12 a_2^2\Big) \right| - \left| \frac{a_2}{2}\right|.
\end{equation}
In view of \eqref{P1-eq-mr-10}, from \eqref{P1-eq-mr-85}, we get  
\begin{align}
    \bigl|\,\gamma_2 \,\bigr|-\bigl|\,\gamma_1\,\bigr| & = \bigl|\,\frac{p_2}{6(1+2\alpha)}-\frac{p_1^2}{16(1+\alpha)^2}\,\bigr|-\bigl|\,\frac{p_1}{4(1+\alpha)}\,\bigr|\nonumber  \\
    & = |B_2 p_1^2 + B_3 p_2 | - |B_1 p_1| =  \Psi_{+}(p_1,p_2), \label{P1-eq-mr-90}
\end{align}
Where
\begin{equation}\label{P1-eq-mr-91}
   B_1=\frac{1}{4(1+\alpha)},~~B_2=-\frac{1}{16(1+\alpha)^2},~~ \mbox{and} ~~B_3=\frac{1}{6(1+2\alpha)}. 
\end{equation}
For $\alpha \geq 0$, one can quickly verify that
 $$|2B_2+B_3|=\left|\frac{1}{6(1+2\alpha)}-\frac{1}{8(1+\alpha)^2}\right| \ngeq \frac{1}{6(1+2\alpha)}+\frac{1}{4(1+\alpha)} = |B_3|+B_1 $$
Therefore, by Lemma \ref{p1-lemma-005}, we obtain
$$\Psi_{+}(p_1,p_2) \le2B_3=\frac{1}{3(1+2\alpha)}.$$ 
Hence, from \eqref{P1-eq-mr-90}, we get 
$$\bigl|\,\gamma_2 \,\bigr|-\bigl|\,\gamma_1\,\bigr|\le\frac{1}{3(1+2\alpha)}.$$
The equality holds for the function $f_2 \in \mathcal{W}(\alpha)$ defined by \eqref{P1-eq-mr-27}.

Now, to obtain a lower estimate of  $\bigl|\,\gamma_2 \,\bigr|-\bigl|\,\gamma_1\,\bigr|$, we consider
\begin{align}\label{P1-eq-mr-92}
  B_4=|4B_2+2B_3|& =\left|\frac{1}{3(1+2\alpha)}-\frac{1}{4(1+\alpha)^2}\right|. 
  \end{align}
 A detailed calculations show that for $\alpha \geq 0$,   
 $$B_4+2|B_3| = \dfrac{2}{3 (1 + 2\alpha)} - \dfrac{1}{4(1 + \alpha)^2} > \dfrac{1}{4(1 + \alpha)}.$$ 
 Hence $B_1 \geq B_4+2|B_3|$ is not possible for any $\alpha \geq 0$. Further, 
 for $0 \leq \alpha \leq 2.232$, we show that 
$$B_1^2\le2|B_3|(B_4+2|B_3|).$$
So, by  Lemma \ref{p1-lemma-005}, we obtain 
\begin{align*}
 \Psi_{-}(p_1,p_2)\le 
\begin{cases}
\dfrac{2B_1 \sqrt{2|B_3|}}{\sqrt{B_4 + 2|B_3|}}, & \mbox{for}~~0 \leq \alpha\le2.232,\\
2|B_3| + \dfrac{B_1^2}{B_4 + 2|B_3|}, & \mbox{for}~~\alpha>2.232, 
\end{cases}   
\end{align*}
where $B_1, B_2, B_3$ and $B_4$ are defined by \eqref{P1-eq-mr-91} and \eqref{P1-eq-mr-92}. Putting the values of $B_1, B_2, B_3$ and $B_4$, we get 
\begin{align}\label{P1-eq-mr-95}
    & \frac{2B_1 \sqrt{2|B_3|}}{\sqrt{B_4 + 2|B_3|}}=\frac{1}{\sqrt{8\alpha^2+10\alpha+5}}~~~ \mbox{and }\\ \nonumber 
    & 2|B_3| + \frac{B_1^2}{B_4 + 2|B_3|}=\frac{1}{3(1+2\alpha)}+\frac{3(1+2\alpha)}{4(8\alpha^2+10\alpha+5)}.
\end{align}
Also from Lemma \ref{p1-lemma-005}, we know $ \Psi_{-}(p_1,p_2) = -\Psi_{+}(p_1,p_2)$. Therefore, in view of \eqref{P1-eq-mr-95}, from \eqref{P1-eq-mr-90}, we obtain 
$$\bigl|\,\gamma_2 \,\bigr|-\bigl|\,\gamma_1\,\bigr|\ge\begin{cases}
     -\dfrac{1}{\sqrt{8\alpha^2+10\alpha+5}}, ~~~~~~\mbox{for}~~~0 \leq \alpha\le2.232,\\
     -\dfrac{1}{3(1+2\alpha)}-\dfrac{3(1+2\alpha)}{4(8\alpha^2+10\alpha+5)},~~~\mbox{for}~~~\alpha>2.232.
 \end{cases}$$
The inequalities are sharp for the function $f_4 \in \mathcal{W}(\alpha)$ defined by 
$$
f'_4 (z) + \alpha f''_4(z) = p_1(z)
$$
where $p_1 \in \mathcal{P}$ with the following form 
 $$p_1(z)=\begin{cases}
\dfrac{1-z^2}{1 - \dfrac{4(1 + \alpha)}{\sqrt{8 \alpha^2 + 10 \alpha +5}} + z^2},~~~
\mbox{for}~~0 \leq \alpha\le2.232,\\
\dfrac{1-z^2}{1-\dfrac{6(1+2\alpha)(1+\alpha)}{8\alpha^2+10\alpha+5}z+z^2},~~~ \mbox{for}~~~\alpha>2.232.
 \end{cases}$$
\end{proof}

Next, we obtain the sharp upper and lower bounds for $|\Gamma_2| - |\Gamma_1|$ where $f$ belongs to the class $\mathcal{W}(\alpha)$.

\begin{theorem}
 Let $f(z)\in \mathcal{W}(\alpha)$ be of the form \eqref{P1-eq-01} and $\Gamma_1$,$\Gamma_2$ logarithmic inverse coefficients of $f$.
 Then $$\bigl|\,\Gamma_2 \,\bigr|-\bigl|\,\Gamma_1\,\bigr|\le \dfrac{1}{3(1+2\alpha)}, ~~~~ \mbox{for}~~ \alpha \geq 0,$$ 
 and 
 $$\bigl|\,\Gamma_2 \,\bigr|-\bigl|\,\Gamma_1\,\bigr|\ge
 \begin{cases}
-\dfrac{1}{\sqrt{3(1+2\alpha)}}~~~~~~~~~~~~~\mbox{for}~~~~~~~\alpha \in [0, 1.5], \\
-\dfrac{1}{2(1+\alpha)}+\left|\dfrac{3}{4(1+\alpha)^2} -\dfrac{1}{3(1+2\alpha)}\right|~~~~~~~~\mbox{for}~~~~\alpha \in [2, 5.84],\\
~~~M(\alpha)~~~~~~~~~~~~~~\mbox{for}~~~~~~~ \alpha \in (1.5, 2) \cup (5.84 , \infty),    
\end{cases}
$$ 
where $$M(\alpha) = -\dfrac{1}{3(1+2\alpha)}-\dfrac{3(1+2\alpha)}{4|4\alpha^2 - 10 \alpha -5|+16(1+\alpha)^2}.$$
All these inequalities are sharp.
\end{theorem}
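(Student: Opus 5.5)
The plan mirrors the proof of the preceding theorem: write $|\Gamma_2|-|\Gamma_1|$ as $\Psi_{+}(p_1,p_2)$ and apply Lemma \ref{p1-lemma-005}. By \eqref{P1-eq-mr-45} and \eqref{P1-eq-mr-50},
$$|\Gamma_2|-|\Gamma_1| = \left|\frac{3}{16(1+\alpha)^2}p_1^2-\frac{1}{6(1+2\alpha)}p_2\right|-\left|\frac{1}{4(1+\alpha)}p_1\right| .$$
So I take
$$B_1=\frac{1}{4(1+\alpha)}, \qquad B_2=\frac{3}{16(1+\alpha)^2}, \qquad B_3=-\frac{1}{6(1+2\alpha)} .$$
Then $B_1\ge 0$, $B_2\in\mathbb{C}$ and $B_3\in\mathbb{R}$, as the lemma requires. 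Next I compute
$$2B_2+B_3=\frac{3}{8(1+\alpha)^2}-\frac{1}{6(1+2\alpha)} \quad\text{and}\quad B_4=|4B_2+2B_3|=\left|\frac{3}{4(1+\alpha)^2}-\frac{1}{3(1+2\alpha)}\right| .$$
Clearing denominators, the quantity inside $B_4$ is
$$\frac{9(1+2\alpha)-4(1+\alpha)^2}{12(1+\alpha)^2(1+2\alpha)}=\frac{-(4\alpha^2-10\alpha-5)}{12(1+\alpha)^2(1+2\alpha)} .$$
This is the source of the factor $|4\alpha^2-10\alpha-5|$ in $M(\alpha)$.

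\textbf{Upper bound.} I check that $|2B_2+B_3|<|B_3|+B_1$ for all $\alpha\ge0$. This holds because $|2B_2+B_3|\le 2B_2+|B_3|$, and
$$2B_2=\frac{3}{8(1+\alpha)^2}\le \frac{3}{8(1+\alpha)}<\frac{1}{4(1+\alpha)}\cdot\frac{3}{2}.$$
This is not immediately enough, so I will verify the inequality directly by cross-multiplying, splitting according to the sign of $4\alpha^2-10\alpha-5$. Once it holds, the lemma gives $\Psi_{+}\le 2|B_3|=1/(3(1+2\alpha))$. Equality is attained by $f_2$ in \eqref{P1-eq-mr-27}, since then $p_1=0$ and $p_2=2$.

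\textbf{Lower bound.} Here $|\Gamma_2|-|\Gamma_1|=-\Psi_{-}$, and I apply the three-case bound for $\Psi_{-}$. For the middle regime I compute
$$\frac{2B_1\sqrt{2|B_3|}}{\sqrt{B_4+2|B_3|}} .$$
When $4\alpha^2-10\alpha-5<0$, i.e.\ $\alpha<(5+3\sqrt5)/4\approx2.93$, this simplifies to
$$B_4+2|B_3|=\frac{3}{4(1+\alpha)^2}, \qquad\text{giving}\qquad \frac{1}{\sqrt{3(1+2\alpha)}} .$$
The condition $B_1^2\le 2|B_3|(B_4+2|B_3|)$ becomes
$$\frac{1}{16(1+\alpha)^2}\le\frac{1}{4(1+\alpha)^2(1+2\alpha)},$$
that is, $1+2\alpha\le 4$, or $\alpha\le 1.5$. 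This produces the first case.

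The first case of the lemma, $B_1\ge B_4+2|B_3|$, gives $-(2B_1-B_4)$, which is the stated bound on $[2,5.84]$. I must locate the $\alpha$ where this holds, using the two sign regimes of $4\alpha^2-10\alpha-5$:
\begin{itemize}
\item For $\alpha<2.93$ it reads $\frac{1}{4(1+\alpha)}\ge\frac{3}{4(1+\alpha)^2}$, i.e.\ $\alpha\ge2$.
\item For $\alpha>2.93$ it reads $\frac{1}{4(1+\alpha)}\ge \frac{1}{3(1+2\alpha)}-\frac{3}{4(1+\alpha)^2}+\frac{2}{3(1+2\alpha)}$, which after clearing denominators is a quadratic inequality with root near $5.84$.
\end{itemize}
On the remaining set $(1.5,2)\cup(5.84,\infty)$ the third case applies. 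There
$$2|B_3|+\frac{B_1^2}{B_4+2|B_3|}$$
is rewritten with common denominator $12(1+\alpha)^2(1+2\alpha)$, which gives $M(\alpha)$.

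For sharpness I follow the previous theorem. I choose $p$ of the form
$$p(z)=\frac{1-z^2}{1-2tz+z^2}, \qquad p_1=2t,\quad p_2=4t^2-2,$$
with $t\in[0,1]$ chosen to be the maximizer in the lemma: $t=1$, i.e.\ $f_1$, in the first case, and the interior critical value in the other cases. I then define $f$ through $f'+\alpha zf''=p$.

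I expect the main obstacle to be the bookkeeping of the case boundaries. The sign change of $4\alpha^2-10\alpha-5$ at $\approx2.93$ sits inside $[2,5.84]$, and I must confirm that the three lemma regimes partition $[0,\infty)$ exactly as stated, including the numerical endpoint $5.84$.
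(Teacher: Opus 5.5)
Your argument follows the paper's proof: the same $B_1,B_2,B_3$, the same use of Lemma \ref{p1-lemma-005}, and the same case split. The upper bound, the value $2B_1-B_4$ on $[2,5.84]$, and the value $M(\alpha)$ on the third range are correct.

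The step that fails is the evaluation in the regime $[0,1.5]$. There $2|B_3|=\frac{1}{3(1+2\alpha)}$ and $B_4+2|B_3|=\frac{3}{4(1+\alpha)^2}$, so
\[
\frac{2B_1\sqrt{2|B_3|}}{\sqrt{B_4+2|B_3|}}=\frac{1}{2(1+\alpha)}\cdot\frac{2(1+\alpha)}{3\sqrt{1+2\alpha}}=\frac{1}{3\sqrt{1+2\alpha}},
\]
not $\frac{1}{\sqrt{3(1+2\alpha)}}$. The lemma therefore gives the stronger bound $|\Gamma_2|-|\Gamma_1|\ge-\frac{1}{3\sqrt{1+2\alpha}}$ on $[0,1.5]$. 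The displayed bound $-\frac{1}{\sqrt{3(1+2\alpha)}}$ is true but never attained, so the sharpness claim in this range is false. No proof of the statement as written can succeed there, and the paper makes the same slip. A consistency check: $M(1.5)=-\frac1{12}-\frac1{12}=-\frac16=-\frac{1}{3\sqrt4}$, whereas $-\frac{1}{\sqrt{12}}\neq-\frac16$.

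Your extremal family also fails for $\alpha<2$. By rotation take $p_1=2t$ with $t\in[0,1]$, and write $p_2=2t^2+2(1-t^2)\zeta$ with $|\zeta|\le1$. Set $a=\frac{1}{3(1+2\alpha)}$ and $b=\frac{3}{4(1+\alpha)^2}$. Then $\Gamma_2=(b-a)t^2-a(1-t^2)\zeta$ and $|\Gamma_1|=\frac{t}{2(1+\alpha)}$.

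For $\alpha<\frac{5+3\sqrt5}{4}$ one has $b>a$, and $|\Gamma_2|$ is minimized at $\zeta=1$, i.e. $p_2=2$. This is realized by
\[
p(z)=\frac{1+2tz+z^2}{1-z^2}=\frac{1+t}{2}\,\frac{1+z}{1-z}+\frac{1-t}{2}\,\frac{1-z}{1+z}.
\]
\begin{itemize}
\item On $[0,1.5]$, take $t=\frac{2(1+\alpha)}{3\sqrt{1+2\alpha}}$. This gives $\Gamma_2=0$ and $|\Gamma_1|=\frac{1}{3\sqrt{1+2\alpha}}$.
\item On $(1.5,2)$, take $t=\frac{1+\alpha}{3}$. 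This gives exactly $M(\alpha)=-\frac{1}{3(1+2\alpha)}-\frac1{12}$.
\end{itemize}
Your family $\frac{1-z^2}{1-2tz+z^2}$ has $p_2=4t^2-2$, i.e. $\zeta=-1$. That choice is optimal only when $b<a$, so it serves on $(5.84,\infty)$, while $t=1$ serves on $[2,5.84]$. At $\alpha=0$ your family gives at best $-\frac1{12}$, whereas the true minimum is $-\frac13$. The paper's extremal functions have the same defect.

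There are also some smaller slips.
\begin{itemize}
\item For $\alpha>2.93$ you added $\frac{2}{3(1+2\alpha)}$ where it should be $2|B_3|=\frac{1}{3(1+2\alpha)}$. As written, your inequality reduces to $\alpha\le\frac12$ and yields no root near $5.84$. The correct condition $\frac{1}{4(1+\alpha)}\ge\frac{2}{3(1+2\alpha)}-\frac{3}{4(1+\alpha)^2}$ is equivalent to $2\alpha^2-11\alpha-4\le0$, i.e. $\alpha\le\frac{11+\sqrt{153}}{4}\approx5.842$.
\item You must also record that $B_1^2\le2|B_3|(B_4+2|B_3|)$ fails for every $\alpha>2.93$; it becomes $4\alpha^2+44\alpha+13\le0$. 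This is what places $(5.84,\infty)$ in the third case.
\item In the upper bound, your chain ends in an equality, not a strict inequality. The needed check reduces to $\frac{1}{8(1+\alpha)}<\frac{1}{3(1+2\alpha)}$, which holds for $\alpha\ge0$.
\end{itemize}
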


\begin{proof}

Let $f\in\mathcal{W}(\alpha)$. Then  from \eqref{eq:Gamma-coefficients}, we get 
\begin{equation}\label{P1-eq-mr-100}
 \bigl|\,\Gamma_2 \,\bigr|-\bigl|\,\Gamma_1\,\bigr|= \left|-\frac12\Big(a_3-\frac32 a_2^2\Big)\right| - \left|-\frac{a_2}{2} \right|.  
\end{equation}
In view of \eqref{P1-eq-mr-10}, from \eqref{P1-eq-mr-100}, we obtain 
\begin{align}\label{P1-eq-mr-105}
   \bigl|\,\Gamma_2 \,\bigr|-\bigl|\,\Gamma_1\,\bigr| & =  \bigl|\,\frac{3p_1^2}{16(1+\alpha)^2}-\frac{p_2}{6(1+2\alpha)}\,\bigr|-\bigl|\,\frac{p_1}{4(1+\alpha)}\,\bigr| =\Psi_{+}(p_1,p_2)
\end{align}
where
\begin{equation}\label{P1-eq-mr-110}
 B_1=\frac{1}{4(1+\alpha)},~~~B_2=\frac{3}{16(1+\alpha)^2}~~\mbox{and}~~~ B_3=-\frac{1}{6(1+2\alpha)}.   
\end{equation}
For $\alpha \geq 0$, it is straightforward to check that  
\begin{align*}
|2B_2+B_3| & = \left|\frac{1}{6(1+2\alpha)}-\frac{3}{8(1+\alpha)^2}\right|\\
& < \frac{1}{6(1+2\alpha)}+\frac{1}{4(1+\alpha)} \\
& =  |B_3|+B_1
\end{align*}
 and hence $|2B_2+B_3|\ngeq|B_3|+B_1.$ So using  Lemma \ref{p1-lemma-005}, we get
$$\Psi_{+}(p_1,p_2) \le2B_3=\frac{1}{3(1+2\alpha)}.$$
Therefore, from \eqref{P1-eq-mr-105}, we get 
$$\bigl|\,\Gamma_2 \,\bigr|-\bigl|\,\Gamma_1\,\bigr|\le\frac{1}{3(1+2\alpha)} ~~~ \mbox{for}~~ \alpha \geq 0$$
and the equality holds for the inverse function of  $f_2 \in \mathcal{W}(\alpha)$ defined by \eqref{P1-eq-mr-27}.

Now, to get a lower estimate of  $\bigl|\,\Gamma_2 \,\bigr|-\bigl|\,\Gamma_1\,\bigr|$, consider $B_4=|4B_2+2B_3|$. By putting the values of $B_2$ and $B_3$ from \eqref{P1-eq-mr-110}, we get 
\begin{align}\label{P1-eq-mr-115} 
  B_4=|4B_2+2B_3|& =\left|\dfrac{3}{4(1+\alpha)^2}-\dfrac{1}{3(1+2\alpha)}\right| \\\nonumber 
  & = \begin{cases}
 \dfrac{3}{4(1+\alpha)^2}-\dfrac{1}{3(1+2\alpha)} ~~~\mbox{for}~~~0\leq \alpha\leq 2.927, \\  
-\dfrac{3}{4(1+\alpha)^2}+\dfrac{1}{3(1+2\alpha)}~~~ \mbox{for}~~~\alpha > 2.927.
\end{cases}
\end{align}
In view of \eqref{P1-eq-mr-110} and \eqref{P1-eq-mr-115}, it is easy to check that  for $0\leq \alpha\leq 2.927$, $$B_1\ge B_4+2|B_3| ~~~~ \mbox{when}~~~\alpha \geq 2$$ and  for $ \alpha >  2.927$ 
$$B_1\ge B_4+2|B_3| ~~~~ \mbox{when}~~~\alpha \leq 5.84.$$
So, as a whole, we get 
\begin{equation}\label{P1-eq-mr-120}
  B_1\ge B_4+2|B_3|  ~~ \mbox{when}~~ 2 \leq \alpha \leq 5.84.
\end{equation} 
Furthermore, by \eqref{P1-eq-mr-115} and \eqref{P1-eq-mr-110}, a calculation of the details shows that for $\alpha > 2.927$
$$
B_1^2 \nleq 2|B_3|(B_4+2|B_3|)
$$
and for $0 \leq \alpha \leq 2.927$ we get 
\begin{equation}\label{P1-eq-mr-125}
   B_1^2\le2|B_3|(B_4+2|B_3|) ~~~ \mbox{when} ~~~ 0\leq \alpha \leq 1.5.
\end{equation}
So using  Lemma \ref{p1-lemma-005}, we obtain 

\begin{align}\label{P1-eq-mr-130}
\Psi_{-}(p_1,p_2)\le 
\begin{cases}
\dfrac{2B_1 \sqrt{2|B_3|}}{\sqrt{B_4 + 2|B_3|}}~~~~~~~ \mbox{when}~~~0 \leq \alpha\leq 1.5,\\
2B_1 - B_4~~~~~~ \mbox{when}~~~2 \leq \alpha\leq 5.84,\\
2|B_3| + \dfrac{B_1^2}{B_4 + 2|B_3|}~~~~\mbox{when}~~~1.5 <\alpha < 2 ~~~\mbox{and} ~~~ \alpha >5.84.
\end{cases}
\end{align}
Also from Lemma \ref{p1-lemma-005}, we know $\Psi_{-}(p_1,p_2) = -\Psi_{+}(p_1,p_2)$. By putting the value of $B_1, B_2, B_3$ and $B_4$ in \eqref{P1-eq-mr-130}, form \eqref{P1-eq-mr-105}, we obtain
$$\bigl|\,\Gamma_2 \,\bigr|-\bigl|\,\Gamma_1\,\bigr|\ge\begin{cases}
-\dfrac{1}{\sqrt{3(1+2\alpha)}}~~~~~~~~~~~~~~~~~~\mbox{for}~~~~~~~0 \leq \alpha\leq 1.5, \\
-\dfrac{1}{2(1+\alpha)}+\left|\dfrac{3}{4(1+\alpha)^2} -\dfrac{1}{3(1+2\alpha)}\right|~~~~~~~~\mbox{for}~~~~~2 \leq\alpha \leq 5.84,\\
~~~M(\alpha)~~~~~~~~~~~~~~\mbox{for}~~~~~~~ \alpha \in (1.5, 2) \cup (5.84 , \infty),    
\end{cases}
$$ 
where $$M(\alpha) = -\dfrac{1}{3(1+2\alpha)}-\dfrac{3(1+2\alpha)}{4|4\alpha^2 - 10 \alpha -5|+16(1+\alpha)^2}.$$
The inequalities are sharp for the inverse  function of $f_5 \in \mathcal{W}(\alpha)$ defined by 
$$
f'_5 (z) + \alpha f''_5(z) = p_2(z)
$$
where $p_2 \in \mathcal{P}$ with the following form 
 $$p_2(z)=\begin{cases}
 \dfrac{1-z^2}{1-\dfrac{4(1+\alpha)}{3\sqrt{1+2\alpha}}z +z^2}~~~~~~~~~~\mbox{for}~~~~~~~0 \leq \alpha\leq 1.5,\\
 \dfrac{1+z}{1-z} ~~~~~~~~~~~~~~~~~\mbox{for}~~~~~2 \leq\alpha \leq 5.84,\\
\dfrac{1-z^2}{1-2\zeta_1z+z^2}~~~~~~~~~\mbox{for}~~~~\alpha \in (1.5, 2) \cup (5.84 , \infty),
 \end{cases}
 $$
where 
$$
\zeta_1=\frac{\dfrac{1}{4(1+\alpha)}}{\frac{1}{3(1+2\alpha)} + \left|\frac{3}{4(1+\alpha)^2}-\frac{1}{3(1+2\alpha)}\right|},~~~~~\alpha \in (1.5, 2) \cup (5.84 , \infty).
$$
\end{proof}


\section{ Hankel Determinant Having Elements of Logarithmic and Logarithmic Inverse Coefficients}\label{section-5}

In this section, we discuss the second Hankel determinant for functions in the class $\mathcal{W}(\alpha)$ where $\alpha = 1$. For $f \in \mathcal{S}$ given by \eqref{P1-eq-01}, the second Hankel determinant of $F_f/2$ using \eqref{p1-eq-002}, is given by 
\begin{equation}\label{P1-eq-mr-135}
    H_{2,1}\!\left(\frac{F_f}{2}\right) :=  \gamma_3 \gamma_1 - \gamma_2^2 = \dfrac14\left(a_2 a_4 - a_3^2 + \dfrac{1}{12}a_2^4\right). 
\end{equation}
Also the second Hankel determinant of $F_{f^{-1}}/2$ using \eqref{eq:Gamma-coefficients}, is given by 
\begin{equation}\label{P1-eq-mr-136}
    H_{2,1}\!\left(\frac{F_{{f}^{-1}}}{2}\right) :=  \Gamma_3 \Gamma_1 - \Gamma_2^2 =\dfrac{1}{4}\left(\dfrac{13}{12}a_2^{4}+ a_2a_4-  a_2^2a_3- a_3^2\right).
\end{equation}

In our first result, we get the sharp bound of $H_{2,1}(F_f/2)$ for functions in $\mathcal{W}(1)$.

\begin{theorem}\label{P1-thm-001}
Let $f\in\mathcal{W}(1)$ be of the form \eqref{P1-eq-01}.  Then 
$$
\left| H_{2,1}\!\left(\frac{F_f}{2}\right) \right| \le \frac{1}{81}.
$$    The bound is sharp.
\end{theorem}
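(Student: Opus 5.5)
The plan is to reduce $H_{2,1}(F_f/2)$ to a polynomial expression in $p_1,p_2,p_3$. I would then parametrize these coefficients by the standard Carathéodory-class formulas and maximize a real function of two variables over a rectangle.

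\textbf{Step 1 (coefficients).} For $\alpha=1$, relation \eqref{P1-eq-mr-10} gives $a_{n+1}=p_n/(n+1)^2$, that is,
$$a_2=\frac{p_1}{4},\qquad a_3=\frac{p_2}{9},\qquad a_4=\frac{p_3}{16}.$$
Substituting into \eqref{P1-eq-mr-135} gives
$$H_{2,1}\!\left(\frac{F_f}{2}\right)=\frac14\left(\frac{p_1p_3}{64}-\frac{p_2^2}{81}+\frac{p_1^4}{3072}\right).$$
The class $\mathcal{W}(1)$ and the modulus $|H_{2,1}|$ are both invariant under rotations $e^{-i\theta}f(e^{i\theta}z)$. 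So I may assume $p_1=c\in[0,2]$.

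\textbf{Step 2 (parametrization).} I would use the well-known Libera--Złotkiewicz representation. There exist $x,\zeta\in\overline{\mathbb{D}}$ with
$$2p_2=c^2+x(4-c^2),$$
$$4p_3=c^3+2(4-c^2)cx-(4-c^2)cx^2+2(4-c^2)(1-|x|^2)\zeta.$$
I use this representation rather than the Schwarz-coefficient bounds of Lemma \ref{p1-lemma-010}. A quick check shows that applying the triangle inequality to the $c_j$-expansion is too crude near $c_1=0$: the mixed terms $c_1^2c_2$ and $c_2^2$ can have aligned phases, and the resulting bound exceeds $4/81$. After substitution, $4\,H_{2,1}$ takes the form
$$A(c)+B(c)x+C(c)x^2+D(c)(1-|x|^2)\zeta,$$
where $A,B,C,D$ are explicit polynomials in $c$ and $D(c)\ge 0$. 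The triangle inequality together with $|\zeta|\le1$ then gives
$$4|H_{2,1}|\le |A(c)|+|B(c)|t+|C(c)|t^2+D(c)(1-t^2)=:G(c,t),\qquad t=|x|\in[0,1].$$

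\textbf{Step 3 (maximization, the main obstacle).} I would show that $G(c,t)\le 4/81$ on $[0,2]\times[0,1]$. At $c=0$ all terms involving $p_1$ vanish, and $G(0,t)=t^2\cdot 4/81$, which is at most $4/81$. The anticipated maximum therefore sits at $c=0$, $t=1$. The delicate part is to show that no interior point, and no point with $c>0$, does better.

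For fixed $c$, $G(c,t)$ is a quadratic in $t$ with leading coefficient $|C(c)|-D(c)$. I would split into two cases:
\begin{itemize}
\item If $|C(c)|\le D(c)$, the quadratic is concave in $t$, and its maximum is either at $t=1$ or at the vertex $t=|B|/(2(D-|C|))$.
\item If $|C(c)|>D(c)$, the maximum is at $t=0$ or $t=1$.
\end{itemize}
This reduces the problem to bounding a few one-variable functions of $c\in[0,2]$ by $4/81$, which I would handle by elementary calculus or explicit polynomial inequalities. The endpoint $c=2$ needs separate treatment: there $p(z)=(1+z)/(1-z)$, and one computes $|H_{2,1}|<1/81$ directly. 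If the vertex case gives a messy radical, I would first try to show that the vertex lies outside $[0,1]$ for the relevant range of $c$.

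\textbf{Step 4 (sharpness).} I would take $f_2$ from \eqref{P1-eq-mr-27} with $\alpha=1$, which corresponds to $p(z)=(1+z^2)/(1-z^2)$, so that $p_1=p_3=0$ and $p_2=2$. Then $a_2=a_4=0$ and $a_3=2/9$, hence
$$H_{2,1}\!\left(\frac{F_f}{2}\right)=-\frac{1}{4}\cdot\frac{4}{81}=-\frac{1}{81},$$
and equality holds.
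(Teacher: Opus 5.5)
Your proposal is correct. It differs from the paper only in the key lemma. You parametrize $(p_2,p_3)$ by the Libera--Z{\l}otkiewicz formula after normalizing $p_1=c\ge 0$. The paper instead writes $a_2,a_3,a_4$ in Schwarz coefficients and uses Carlson's bound $|c_3|\le 1-|c_1|^2-|c_2|^2/(1+|c_1|)$ from Lemma \ref{p1-lemma-010}. The two routes produce literally the same majorant. Carrying out your Step 2 gives $A(c)=\frac{95}{82944}c^4$, $B(c)=\frac{17}{10368}c^2(4-c^2)$, $|C(c)|=(4-c^2)\bigl(\frac{c^2}{256}+\frac{4-c^2}{324}\bigr)$ and $D(c)=\frac{c(4-c^2)}{128}$. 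One then checks that $G(c,t)=4F\bigl(\frac c2,\,t(1-\frac{c^2}{4})\bigr)$ with $F$ as in \eqref{P1-eq-mr-170}; that is, $|c_1|=c/2$ and $|c_2|=t(1-|c_1|^2)$. Consequently the concave branch of your Step 3 never occurs, because $|C(c)|-D(c)=(4-c^2)(2-c)(128-17c)/20736>0$ for $0\le c<2$. Only $t=0$ and $t=1$ matter, and they give $G(c,0)=\frac{95}{82944}c^4+\frac{c(4-c^2)}{128}<0.027$ and $G(c,1)=\frac{4}{81}-\frac{13}{5184}c^2-\frac{109}{82944}c^4$. These are exactly $4F(x,0)$ and $4F(x,1-x^2)$ from the paper's two cases. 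Your route costs a rotation normalization and a glance at $c=2$, while the paper's works directly with moduli; neither is stronger. One correction: your stated reason for avoiding the $c_j$-expansion is false. With Carlson's bound, the triangle inequality on that expansion yields exactly your $G$, whose maximum is $4/81$. It overshoots only if one uses the cruder $|c_3|\le 1-|c_1|^2$ (or $|c_3|\le 1$). The culprit is then the term $\frac1{16}c_1c_3$, which becomes linear in $|c_1|$, not the $c_1^2c_2$ and $c_2^2$ terms. On $|c_2|=1-|c_1|^2$, with $x=|c_1|$, those two terms contribute $\frac{17}{648}x^2(1-x^2)+\frac{4}{81}(1-x^2)^2$, which decreases near $x=0$.
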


\begin{proof}
 Let $f(z)\in \mathcal{W}(1)$ be of the form  \eqref{P1-eq-01}. Then by the definition of subordination there exists a Schwarz function $\varphi$ defined by \eqref{Schwarz function-1} such that 
 \begin{equation}\label{P1-eq-mr-140}
    f'(z)+z f''(z)=\dfrac{1+\varphi(z)}{1-\varphi(z)}.
 \end{equation}
In view of the representation of power series of $f$ and $\varphi$ and comparing the coefficients on both sides of \eqref{P1-eq-mr-140}, we get 
\begin{equation}\label{P1-eq-mr-145}
a_2 = \frac{c_1}{2}, ~~~~ a_3 = \frac{2 (c_2 + c_1^2)}{9}~~~~ \mbox{and}~~~~~ a_4 = \frac{c_3 + 2c_1c_2 + c_1^3} {8}.
\end{equation}
Substitute the above values of $a_2, a_3$ and $a_4$ in \eqref{P1-eq-mr-135} and  further simplification gives 
\begin{equation}\label{P1-eq-mr-150}
 H_{2,1}\!\left(\frac{F_f}{2}\right) =\dfrac14 \left(\dfrac{95}{5184}c_1^4 + \dfrac{17}{648}c_1^2 c_2 + \dfrac{1}{16}c_1 c_3 - \dfrac{4}{81}c_2^2 \right).  \end{equation}
Taking the modulus and using the triangle inequality, \eqref{P1-eq-mr-150}  yields
\begin{equation}\label{P1-eq-mr-155}
    4\left|H_{2,1}\!\left(\frac{F_f}{2}\right)\right|
 \le \frac{95}{5184}|c_1|^4
+ \frac{17}{648}|c_1|^2|c_2|
+ \frac{1}{16}|c_1||c_3|
+ \frac{4}{81}|c_2|^2.
\end{equation}
By Lemma \ref{p1-lemma-010} from \eqref{P1-eq-mr-155}, we obtain 
\begin{equation}\label{P1-eq-mr-160}
    4\left|H_{2,1}\!\left(\frac{F_f}{2}\right)\right|
 \le \frac{95}{5184}|c_1|^4 + \frac{17}{648}|c_1|^2|c_2| + \frac{1}{16}|c_1|\left( 1 - |c_1|^2 - \frac{|c_2|^2}{1 + |c_1|^2} \right) 
+ \frac{4}{81}|c_2|^2.
\end{equation}
Let $x:=|c_1|$ and $y:=|c_2|$. 
In view of the lemma \ref{p1-lemma-010}, the region of variability of the  pair $(x, y)$ coincides with the set
$$
\Omega: = \{(x, y): 0 \leq x \leq 1, 0 \leq y \leq 1 - x^2\}. 
$$
Therefore form \eqref{P1-eq-mr-160}, we get 
\begin{align}\label{P1-eq-mr-165}
    \left|H_{2,1}\!\left(\frac{F_f}{2}\right)\right| \leq  F(x, y)
\end{align}
where
\begin{equation}\label{P1-eq-mr-170}
 F(x,y ) =  \frac{95}{20736}x^4 + \frac{17}{2592}x^2 y + \frac{1}{64}x\!\left(1-x^2-\frac{y^2}{1+x}\right)
+ \frac{1}{81}y^2. 
\end{equation}
Also the function $F(x, y)$ defined by \eqref{P1-eq-mr-170} can be represented as a quadratic polynomial of $y$ with variable coefficients of $x$ as 
\begin{equation}\label{P1-eq-mr-175}
 F(x, y)=  A(x) + B(x)\,y + C(x)\,y^2,
\end{equation}
where 
$$A(x)=\frac{95}{20736}x^4+\frac{1}{64}x(1-x^2), ~~ B(x)=\frac{17}{2592}x^2 ~~\mbox{and}~~ C(x)=\frac{1}{81}-\frac{x}{64(1 + x)}.$$
We need to find the maximum value of $F(x, y)$ in the region $\Omega$.  
For $0 \leq x \leq 1$,  one can verify that $0\le \frac{x}{1+x}\le\frac12$. Thus 
$$C(x)\ge\dfrac{1}{81}-\dfrac{1}{128}=\dfrac{47}{10368}>0.$$  
So for each fix $x\in [0, 1]$, the quadratic polynomial of $y$ given by \eqref{P1-eq-mr-175} is a convex function on $\Omega$. Therefore, by  Bauer's maximum principle, for each fixed $x\in [0, 1]$, the maximum value  of 
$$A(x)+B(x)y+C(x)y^2 ~~~~~~ \mbox{for} ~~~ 0 \leq y \leq 1 - x^2$$ 
occurs at one of the endpoints, i.e.,  either at   $y=0$ or at $y=1-x^2$. Now we consider the following cases:

\noindent \textbf{Case-I}:
At the end point $y=0$, 
$$
\max_{x \in [0,1]} F(x, y) = \max_{x \in [0,1]} F(x, 0). 
$$
From \eqref{P1-eq-mr-170}, we get 
\begin{align*}
    F(x, 0) = \frac{95}{20736}x^4+\frac{1}{64}x(1-x^2)~~~\mbox{for}~~~0 \leq x\leq 1. 
\end{align*}
A straightforward and detailed calculation shows that 
\begin{align*}
    F(x, 0) \leq  0.01059~~~\mbox{for}~~~0 \leq x\leq 1. 
\end{align*}
So 
\begin{equation}\label{P1-eq-mr-180}
 \max_{x \in [0,1]} F(x, 0)  =  0.01059. 
\end{equation}

\noindent \textbf{Case-II}:
At the end point  $y=1-x^2$,
$$
\max_{x \in [0,1]} F(x, y) = \max_{x \in [0,1]} F(x,  1- x^2). 
$$
From \eqref{P1-eq-mr-170}, we get 
\begin{align*}
    F(x, 1 - x^2) &:=\frac{95}{20736}x^4+\frac{17}{2592}x^2(1-x^2)
+\frac{1}{64}x\left(1-x^2-\frac{(1-x^2)^2}{1+x}\right)
+\frac{1}{81}(1-x^2)^2\\[4pt]
&=-\frac{109}{20736}x^4-\frac{13}{5184}x^2+\frac{1}{81},
\end{align*}
which is a decreasing function for $0 \leq x \leq 1.$ So 
\begin{equation}\label{P1-eq-mr-185}
 \max_{x \in [0,1]} F(x, 1 - x^2)  =  \frac{1}{81}. 
\end{equation}
So, from \eqref{P1-eq-mr-180} and \eqref{P1-eq-mr-185} we get
$$
F(x, y ) \leq \max \{0.0105, \frac{1}{81}\} = \frac{1}{81} ~~~ \mbox{for}~~(x, y) \in \Omega.
$$
Hence in view of \eqref{P1-eq-mr-165}, we obtain 
$$
\left|H_{2,1}\!\left(\frac{F_f}{2}\right)\right| \leq  \frac{1}{81}.
$$
The above inequality is sharp for the function $f_6 \in \mathcal{W}(1)$ defined by
\begin{equation}\label{P1-eq-mr-186}
 f_6(z)=z+\sum_{n=1}^{\infty}\frac{2z^{2n+1}}{(2n+1)^2}.   
\end{equation}
\end{proof}


Next, we will discuss the sharp bound of $H_{2, 1}(F_{f^{-1}}/2)$ for the functions in class $\mathcal{W}(1)$.
\begin{theorem}
\;If $f\in\mathcal{W}(1)$  be of the form \eqref{P1-eq-01}.Then 
$$
\left| H_{2,1}\!\left(\frac{F_{f^{-1}}}{2}\right) \right| \le \frac{1}{81}.
$$    
The inequality is sharp.
\end{theorem}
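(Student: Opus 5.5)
We follow the same scheme as in the proof of Theorem~\ref{P1-thm-001}: write $H_{2,1}(F_{f^{-1}}/2)$ in terms of the Schwarz coefficients $c_1,c_2,c_3$, bound it by a two-variable function on the region $\Omega$, and maximize that function.

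\textbf{Step 1 (expressing $H$ in Schwarz coefficients).} Since $f\in\mathcal{W}(1)$, the relations \eqref{P1-eq-mr-145} hold:
$a_2=c_1/2$, $a_3=2(c_2+c_1^2)/9$ and $a_4=(c_3+2c_1c_2+c_1^3)/8$. Substituting these into \eqref{P1-eq-mr-136} gives the pieces
\begin{align*}
\tfrac{13}{12}a_2^4&=\tfrac{13}{192}c_1^4, & a_2a_4&=\tfrac{1}{16}c_1c_3+\tfrac18c_1^2c_2+\tfrac1{16}c_1^4,\\
a_2^2a_3&=\tfrac1{18}(c_1^2c_2+c_1^4), & a_3^2&=\tfrac{4}{81}(c_2^2+2c_1^2c_2+c_1^4).
\end{align*}
Collecting terms over the common denominator $5184$ should give
$$
4\,H_{2,1}\!\left(\frac{F_{f^{-1}}}{2}\right)=\frac{131}{5184}c_1^4-\frac{19}{648}c_1^2c_2+\frac1{16}c_1c_3-\frac4{81}c_2^2 .
$$
This arithmetic is the step where an error is most likely, so it must be checked carefully.

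\textbf{Step 2 (reduction to $\Omega$).} Apply the triangle inequality, and then Lemma~\ref{p1-lemma-010} in the form $|c_3|\le 1-|c_1|^2-|c_2|^2/(1+|c_1|)$. With $x=|c_1|$ and $y=|c_2|$ this gives $|H_{2,1}(F_{f^{-1}}/2)|\le G(x,y)$ on $\Omega$, where
$$
G(x,y)=\frac{131}{20736}x^4+\frac{19}{2592}x^2y+\frac{x}{64}\Big(1-x^2-\frac{y^2}{1+x}\Big)+\frac{y^2}{81}.
$$
As a quadratic in $y$, $G$ has leading coefficient $\frac1{81}-\frac{x}{64(1+x)}$. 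This coefficient is identical to $C(x)$ in the previous proof, so it is at least $47/10368>0$. Hence $G(x,\cdot)$ is convex, and for each fixed $x$ its maximum over $0\le y\le 1-x^2$ is attained at $y=0$ or at $y=1-x^2$.

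\textbf{Step 3 (the two boundary curves).}
\begin{itemize}
\item On $y=1-x^2$, the $c_3$-term simplifies to $x^2(1-x^2)/64$. Collecting terms, I expect
$$G(x,1-x^2)=-\frac{89}{20736}x^4-\frac1{576}x^2+\frac1{81},$$
which is decreasing on $[0,1]$ with maximum $1/81$ at $x=0$.
\item On $y=0$, $G(x,0)=\frac{131}{20736}x^4+\frac{x(1-x^2)}{64}$. Near the critical point $x\approx 1/\sqrt3$ its value is about $0.0068$. The crude bound $x(1-x^2)\le 2/(3\sqrt3)$ together with $x^4\le1$ gives at most about $0.0124$, which is too close to $1/81$ to separate the cases. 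So a slightly finer estimate is needed here, for instance locating the critical point of the cubic-plus-quartic directly. Showing this maximum stays strictly below $1/81\approx0.01235$ is the main (mild) obstacle.
\end{itemize}
Together the two cases give $|H_{2,1}(F_{f^{-1}}/2)|\le 1/81$.

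\textbf{Step 4 (sharpness).} Take $f_6$ from \eqref{P1-eq-mr-186}, which corresponds to $\varphi(z)=z^2$, that is, $c_1=c_3=0$ and $c_2=1$. The formula of Step 1 then gives $4H=-4/81$, so $|H_{2,1}(F_{f^{-1}}/2)|=1/81$ and the bound is attained.
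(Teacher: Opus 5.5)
Your proposal is correct and follows the paper's route: the same Schwarz-coefficient expression for $H_{2,1}(F_{f^{-1}}/2)$, the same majorant on $\Omega$ obtained from Carlson's lemma, convexity in $y$, the two boundary curves, and $f_6$ as the extremal function. You actually supply details the paper leaves to ``a detailed calculation'' (the boundary polynomial $-\frac{89}{20736}x^4-\frac{1}{576}x^2+\frac{1}{81}$ is right), and for the case $y=0$ the crude bound you dismissed already suffices, since $\frac{131}{20736}+\frac{1}{96\sqrt3}<\frac{1}{81}$ is equivalent to $\sqrt3>1.728$ (the true maximum of $G(x,0)$ is about $0.0071$, attained near $x\approx 0.747$ rather than at $1/\sqrt3$).
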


\begin{proof}
 Let $f(z)\in \mathcal{W}(1)$ be of the form \eqref{P1-eq-01}. Using \eqref{P1-eq-mr-145}, form \eqref{P1-eq-mr-136}, we get  the second Hankel determinant
\begin{align}\label{P1-eq-mr-190}
H_{2,1}\!\left(\frac{F_{f^{-1}}}{2}\right)
& =\Gamma_3\Gamma_1-\Gamma_2^2\\ \nonumber 
& =\dfrac{13}{48}a_2^{4}+\dfrac{1}{4}a_2a_4-\dfrac{1}{4}a_2^2a_3-\dfrac{1}{4}a_3^2\\ \nonumber 
& = \dfrac{131}{20736}c_1^4 - \dfrac{19}{2592}c_1^2 c_2 + \dfrac{1}{64}c_1 c_3 - \dfrac{1}{81}c_2^2. \label{P1-eq-mr-190}
\end{align}
Taking the modulus and using the triangle inequality, \eqref{P1-eq-mr-190} yields
 $$
\left| H_{2,1}\!\left(\frac{F_{f^{-1}}}{2}\right) \right|
\le \frac{131}{20736}x^4
+ \frac{19}{2592}x^2 y
+ \frac{1}{64}x\,\left(1-x^2-\frac{y^2}{1+x}\right)
+ \frac{1}{81}y^2 = F(x, y)
$$
where $x = |c_1|$ and $y = |c_2|$.
Adopting the similar techniques from Theorem \ref{P1-thm-001} and after doing a detailed calculation, we obtain 


$$
\left| H_{2,1}\!\left(\frac{F_{f^{-1}}}{2}\right) \right| \le \frac{1}{81}.
$$
The equality holds for inverse function of $f_6 \in  \mathcal{W}(1)$ defined by \eqref{P1-eq-mr-186}.

\end{proof}

\section{Declarations}
\textit{Acknowledgments:} The first author would like to thank UGC, Govt. of India, for the financial support (NTA Ref. No. 231620097064 ) in the form of a fellowship.\\
\textit{Author Contributions:} All the authors contributed equally to this manuscript and reviewed it. \\
 \textit{Data Availability :} No datasets were generated or analysed during the current study. \\
\textit{Conflict of interest:} There is no competing interest.\\

\bibliographystyle{amsplain}
\bibliography{references}

\end{document}